\newtheorem{thm}{Theorem}[section]
\newtheorem{lem}{Lemma}[section]
\newtheorem{defn}{Definition}[section]
\newtheorem{rem}{Remark}[section]
\DeclareMathOperator{\col}{col}
\DeclareMathOperator{\sign}{sign}
\title{\LARGE \bf
An Adaptive Collaborative Neurodynamic Approach to Compute Nash Equilibrium in Normal-Form Games
}
\author{Jianing Chen
\thanks{Jianing Chen is with the Department of Mathematics, Harbin Institute of Technology, Weihai, 264209,  China, and also with the Department of Systems Engineering, City University of Hong Kong, Hong Kong {\tt\small jchen2265-c@my.cityu.edu.hk}}
}
\begin{document}

\maketitle
\thispagestyle{empty}
\pagestyle{empty}

\begin{abstract}
The Nash Equilibrium (NE), one of the elegant and fundamental concepts in game theory, plays a crucial part within various fields, including engineering and computer science. However, efficiently computing an NE in normal-form games remains a significant challenge, particularly for large-scale problems. In contrast to widely applied simplicial and homotopy methods, this paper designs a novel Adaptive Collaborative Neurodynamic Approach (ACNA), which for the first time guarantees both exact and global NE computation for general $N$-player normal-form games with mixed strategies, where the payoff functions are non-convex and the pseudo-gradient is non-monotone. Additionally, leveraging the adaptive penalty method, the ACNA ensures its state enters the constraint set in finite time, which avoids the second-order sufficiency conditions required by Lagrangian methods, and the computationally complicated penalty parameter estimation needed by exact penalty methods. Furthermore, by incorporating the particle swarm algorithm, it is demonstrated that the ACNA achieves global convergence to an exact NE with probability one. At last, a simulation is conducted to validate the effectiveness of the proposed approach.
\end{abstract}

\section{Introduction}
Game theory, which analyzes strategic interactions among rational players, witnesses expansive applications in various research areas, such as economics \cite{bierman1998game}, control theory \cite{amit2024games}, evolutionary biology \cite{ritzberger1995evolutionary}, and, more recently, computer science \cite{li2024survey}. Since John Nash's pioneering work \cite{nash1950equilibrium}, the NE has emerged as a fundamental solution concept, ensuring that the optimization conditions of each player are satisfied, given the strategies of all other players. Following its establishment and the proof of its existence for any finite games using Brouwer's fixed-point theorem, numerous researchers have developed various numerical approaches to compute an NE in normal-form games \cite{lemke1964equilibrium,rosenmuller1971generalization,garcia1973simplicial,herings2001differentiable,cao2025efficient}.

The complexity of computing an NE varies depending on the number of players. For two-player game, the condition of NE can be formulated as a linear complementarity system, making the Lemke-Howson algorithm \cite{lemke1964equilibrium} the earliest and still the most significant method to seek an NE. However, when there are three or more players, the problem becomes considerably more challenging, as it is no longer a linear problem. In such cases, its payoff functions are polynomial, leading to a non-convex structure with a non-monotone pseudo-gradient. Consequently, most gradient descent-based approaches lose their effectiveness \cite{he2022distributed,yuan2024event,Rao2023,Ye2020,li2024logical,huang2024distributed,deng2022nash,Wu2020,Zhou2005,Bianchi2021,deng2018distributed,meng2022attack,deng2021distributed,franci2021training,Wang2021}.

In recent decade, researches have made significant progress in designing approaches to compute an NE in $N$-player normal-form games. Notable examples include the generalized Lemke-Howson algorithm proposed by Rosenm\"{u}ller \cite{rosenmuller1971generalization} and Wilson \cite{wilson1971computing}, the simplicial approximation by Lemke and L\"{u}thi \cite{garcia1973simplicial}, as well as various homotopy techniques \cite{herings2001differentiable,cao2025efficient}. However, for games involving three or more players, almost all of the existing approaches can only yield approximate solutions \cite{rosenmuller1971generalization,garcia1973simplicial,herings2001differentiable,cao2025efficient}. As highlighted in a recent review \cite{li2024survey}, as far as we know, there is still no known approach capable of computing an exact NE for general $r$-player games with $r\geq 3$.

Fortunately, in the realm of infinite (or continuous) games, several effective approaches have been developed to ensure global convergence despite non-convex payoff functions \cite{xia2023collaborative,chen2024approaching}. For instance, a collaborative neurodynamic approach (CNA) was formulated to find the generalized Nash equilibrium in multi-cluster games with non-convex payoff functions \cite{xia2023collaborative}. Additionally, a conjugate-based ordinary differential equation was designed to achieve global NE seeking for non-convex games with a canonical form \cite{chen2024approaching}. However, in the case of normal-form games with $N$ players, the payoff functions do not satisfy the canonical form nor the second-order optimality conditions required in \cite{xia2023collaborative}. As a result, directly computing an NE for normal-form games remains challenging. One promising method involves reformulating the normal-form game as an optimization problem, providing new insights into this issue. A formulation was proposed in \cite{chatterjee2009optimization}, which transformed the normal-form game into an optimization problem with polynomial cost functions and constraints. Nevertheless, developing efficient approaches to solve such optimization problem remains difficult. Another method involves converting the normal-form game into a multi-objective constrained optimization problem, where a cone-$\epsilon$ MOEA algorithm was designed to compute the Pareto-optimal NE. However, this method loses efficiency when applied to general NE computation. Given these challenges, we pose and explore the following question: \textit{Can we construct an optimization problem whose global optimal solution corresponds to an NE of a normal-form game and develop an efficient approach to guarantee both global and exact NE computation?}

The primary contributions of this paper are concluded bellow
\begin{enumerate}
	\item In contrast to the simplicial methods, homotopy methods and gradient descent-based approaches \cite{lemke1964equilibrium,rosenmuller1971generalization,garcia1973simplicial,herings2001differentiable,cao2025efficient,he2022distributed,yuan2024event,Rao2023,Ye2020,li2024logical,huang2024distributed,deng2022nash,Wu2020,Zhou2005,Bianchi2021,deng2018distributed,meng2022attack,deng2021distributed,franci2021training,Wang2021}, this paper introduces, for the first time, a novel ACNA that ensures both global and exact NE computation in general $N$-player normal-form games. By reformulating the normal-form game as an optimization problem, it is proved that an NE of the game corresponds to a global minima of the optimization problem. Furthermore, the cost function of the optimization problem is continuous, differentiable, and nonnegative, while the constraint set is convex and bounded, which greatly reduces the computational complexity.
	\item Compared to the Lagrangian method in \cite{xia2023collaborative}, the adaptive penalty method proposed in this paper circumvents the need for second-order sufficiency condition of the cost function, which is not satisfied by this optimization problem. Additionally, we do not need the cost function to be a canonical form as induced by \cite{chen2024approaching}. Leveraging the adaptive penalty method, the state of the ACNA will enter the constraint set in finite time—an advantage not guaranteed by most Lagrangian-based approaches \cite{xia2005recurrent,xia2023collaborative,liu2016collective}. Moreover, in contrast to the exact penalty method \cite{yan2016collective}, the penalty parameters in the ACNA are autonomously determined based on the intrinsic characteristic of the constraints.
\end{enumerate}

\section{Preliminary and Problem Statement}
\subsection{Notations}
$\mathcal{R}$, $\mathcal{R}^m$ and $\mathcal{R}^{m\times n}$ represent the real number set, $m$-dimensional vector set and $m\times n$ matrix set, respectively. $\mathcal{R}^m_{+}$ denotes the $m$-dimensional positive vector set. $(\cdot)^{\top}$ and $\|\cdot\|$ denote the transpose and the Euclidean norm, respectively. For $x_i\in\mathcal{R}^m$ and $i\in\mathbb{V}=\{1,\cdots,N\}$, let $\col(x_1,\cdots,x_N)=[x_1^{\top},\cdots,x_N^{\top}]^{\top}$. $\mathbf{0}_m=\col(0,0,\cdots,0)\in\mathcal{R}^m$ and $\mathbf{1}_m=\col(1,1,\cdots,1)\in\mathcal{R}^m$.
$I_m$ is the $m\times m$ identity matrix. Denote diag$\{\varrho_{1},\cdots,\varrho_{N}\}$ as a diagonal matrix with principal diagonal elements $\varrho_{i}$ for $i\in\mathbb{V}$. Furthermore, $\times$ and $\otimes$ are the Cartesian product and the Kronecker product, respectively. For a group of sets $S_1,\cdots,S_N$, let $\Pi_{i\in\mathbb{V}}S_i=S_1\times\cdots\times S_N.$ Denote $\sign(x)$ as the signum function of a variable $x\in\mathbb{R}$, where $\sign(x)=1$ if $x>0$; $\sign(x)=-1$ if $x<0$; $\sign(x)=0$, otherwise.
\par
\subsection{Normal-Form Game with Mixed Strategies}
In this paper, we aim to compute the NE for finite normal-form game with mixed strategies. The following necessary notation is presented for further analysis \cite{cao2025efficient}. Consider a finite $N$-player normal-form game. Player $i \in \mathbb{V}$ has $m_i$ pure strategies. Denote $m=\sum_{i \in \mathbb{V}} m_i$. Player $i$'s pure strategy set is given as $S^i=\left\{s_j^i \mid j \in \mathbb{M}_i\right\}$ with $\mathbb{M}_i=\left\{1,2, \cdots, m_i\right\}$. The set of all pure strategy profiles is denoted as $S=\prod_{i \in \mathbb{V}} S^i$. A pure strategy profile $s=\col\left(s_{j_1}^1, s_{j_2}^2, \cdots, s_{j_N}^N\right) \in S$ can be usually rewritten as $s=\col\left(s^i, s^{-i}\right)$ with $s^{-i}=\left(s_{j_1}^1, \cdots, s_{j_{i-1}}^{i-1}, s_{j_{i+1}}^{i+1}, \cdots, s_{j_N}^N\right) \in S^{-i}=\prod_{h \in \mathbb{V} \backslash\{i\}} S^h$. For $i \in \mathbb{V}$, let $u^i: S \rightarrow \mathcal{R}$ represent the payoff function of player $i$. Player $i$'s mixed strategy is a probability distribution on $S^i$ represented as $x^i=\col\left(x_1^i, x_2^i, \cdots, x_{m_i}^i\right)$. Let $X^i$ be the set of player $i$ 's mixed strategies with $X^i=\left\{x^i=\col\left(x_1^i, x_2^i, \cdots, x_{m_i}^i\right) \in \mathcal{R}_{+}^{m_i} \mid \sum_{j \in \mathbb{M}_i} x_j^i=1\right\}$. For $x^i \in X^i,~ x_j^i$ is the probability assigned to pure strategy $s_j^i \in S^i$. The set of all mixed strategy profiles is given as $X=\prod_{i \in \mathbb{V}} X^i$. A mixed strategy profile $x=\col\left(x^1, x^2, \cdots, x^N\right) \in X \quad$ can be usually rewritten as $x=\col\left(x^i, x^{-i}\right)$ with $x^{-i}=\left(x^1, \cdots, x^{i-1}, x^{i+1}, \cdots, x^N\right) \in X^{-i}=\prod_{k \in \mathbb{V} \backslash\{i\}} X^k$. If $x \in X$ is played, then the probability that a pure strategy profile $s=\col\left(s_{j_1}^1, s_{j_2}^2, \cdots, s_{j_N}^N\right) \in S$ occurs is equal to $\prod_{i \in \mathbb{V}} x_{j_i}^i$. For $x \in X$, the expected payoff of player $i$ is thus given by $u^i(x)=\sum_{j \in \mathbb{M}_i} x_j^i u^i\left(s_j^i, x^{-i}\right)$ with $u^i\left(s_j^i, x^{-i}\right)=\sum_{s^{-i} \in S^{-i}} u^i\left(s_j^i, s^{-i}\right) \prod_{r\in\mathbb{V} \backslash\{i\}} x_{j_r}^r$. Given these notations, a finite $N$-player normal-form game is written as $\Gamma=\left\langle \mathbb{V}, X,\left\{u^i\right\}_{i \in \mathbb{V}}\right\rangle$.
\begin{defn}\cite{nash1950equilibrium}\label{dingyi1}
    A mixed strategy profile $x^* \in X$ is an NE if $u^i\left(x^*\right) \geq u^i\left(x^i, x^{*-i}\right)$ for all $x^i \in X^i$ and $i \in \mathbb{V}$.
\end{defn} 

In summary, for $i\in\mathbb{V}$, the $i$th player's task is to solve the following interdependent optimization problem
\begin{equation}\label{wenti1}
\begin{aligned}
\max _{x^i} & ~~u^i(x^i,x^{-i}) \\
\text { s.t. } & ~\sum_{j \in \mathbb{M}_i} x_j^i=1,~x_j^i\geq0,~j \in \mathbb{M}_i.
\end{aligned}
\end{equation}
\begin{rem}
By viewing the mixed strategy profile $x$ as a continuous variable, the discrete game $\Gamma$ with finite strategies is transformed into a continuous game \eqref{wenti1} with infinite strategies. It should be noted that the payoff function $u^i$ is multilinear, whose pseudo-gradient $PG(x)=\col(\partial_{x^1}u^1,\cdots,\partial_{x^N}u^N)$ is non-monotone when $N>2$. However, almost all of the NE seeking approaches relay heavily on the monotone condition for global and exact NE computation \cite{he2022distributed,yuan2024event,Rao2023,Ye2020,li2024logical,huang2024distributed,deng2022nash,Wu2020,Zhou2005,Bianchi2021,deng2018distributed,meng2022attack,deng2021distributed,franci2021training,Wang2021}.  As illustrated in \cite{li2024survey}, it is still an open problem for exact NE computation in general $r$-player games with $r\geq 3$.
\end{rem}

Although it is hard to directly solve the non-monotone game \eqref{wenti1}, there are still many techniques, such as the CNA, for non-convex optimization problems. Thus, in the following subsection, the non-monotone game \eqref{wenti1} will be properly transformed as an optimization problem.
\subsection{Optimization Problem Transformation}
In this subsection, the non-monotone game \eqref{wenti1} will be transformed as an optimization problem to lower computational difficulty.

As indicated by Definition \ref{dingyi1}, when $x^*$ is an NE, the inequality $u^i(x^*)\geq u^i(s^i_j,x^{*-i})$ holds for any $i\in\mathbb{V}$ and $j\in\mathbb{M}_i$. Thus, we can define 
\begin{equation}
\begin{aligned}
z_j^i(x)&=u^i\left(s_j^i, x^{-i}\right)-u^i(x),\\
Q_j^i(x)&=\max \left\{z_j^i(x), 0\right\}.
\end{aligned}
\end{equation}
Then, the non-monotone game \eqref{wenti1} can be transformed as the following optimization problem
\begin{equation}\label{wenti2}
\begin{aligned}
\min_x & ~~\tilde{Q}(x)=\sum_{i \in \mathbb{V}} \sum_{j\in\mathbb{M}_i}\left[Q_j^i(x)\right]^2 \\
\text { s.t. } & ~~x^i \in X^i,~ i \in \mathbb{V}.
\end{aligned}
\end{equation}

It is obvious that the cost function $\tilde{Q}$ is continuous, differentiable, and satisfies the inequality $\tilde{Q}(x) \geq 0,~\forall x \in X$. 
\begin{lem}
    $x^*$ is an NE of game \eqref{wenti1}, if and only if, it is a global minima of problem \eqref{wenti2}.
\end{lem}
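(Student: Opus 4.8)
The plan is to prove both implications by pinning down that the optimal value of \eqref{wenti2} equals $0$ and that $\tilde Q(x)=0$ is equivalent to the Nash condition in Definition \ref{dingyi1}.

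First, for the ``only if'' direction, I would take an NE $x^*\in X$ and specialize the defining inequality $u^i(x^*)\geq u^i(x^i,x^{*-i})$ to the choice of $x^i$ equal to the pure strategy $s^i_j$ (viewed as a degenerate element of $X^i$). This yields $u^i(s^i_j,x^{*-i})\leq u^i(x^*)$, so $z^i_j(x^*)\leq 0$ and $Q^i_j(x^*)=\max\{z^i_j(x^*),0\}=0$ for every $i\in\mathbb{V}$ and $j\in\mathbb{M}_i$; hence $\tilde Q(x^*)=0$. Since $\tilde Q\geq 0$ on $X$ and $x^*\in X$, it follows at once that $x^*$ is a global minimizer of $\tilde Q$.

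For the ``if'' direction, I would first certify that $\min_{x\in X}\tilde Q(x)=0$: by Nash's existence theorem \cite{nash1950equilibrium} the game \eqref{wenti1} admits at least one NE $\bar x\in X$, and the ``only if'' part already gives $\tilde Q(\bar x)=0$, so together with $\tilde Q\geq 0$ the minimum value is indeed $0$. Consequently any global minimizer $x^*$ satisfies $\tilde Q(x^*)=0$, which forces $Q^i_j(x^*)=0$, i.e. $z^i_j(x^*)\leq 0$, i.e. $u^i(s^i_j,x^{*-i})\leq u^i(x^*)$ for all $i\in\mathbb{V}$ and $j\in\mathbb{M}_i$. It then remains to promote this from pure-strategy deviations to arbitrary mixed deviations using multilinearity of the expected payoff in the player's own strategy: for any $x^i\in X^i$, $u^i(x^i,x^{*-i})=\sum_{j\in\mathbb{M}_i}x^i_j\,u^i(s^i_j,x^{*-i})\leq\big(\sum_{j\in\mathbb{M}_i}x^i_j\big)u^i(x^*)=u^i(x^*)$, where the last equality uses $\sum_{j\in\mathbb{M}_i}x^i_j=1$. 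Hence $x^*$ satisfies Definition \ref{dingyi1} and is an NE.

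The main (and essentially only) subtlety is in the ``if'' direction: the phrase ``global minimizer'' carries information only once the minimum value is known, and the cleanest certificate that this value is $0$ is the existence of an NE combined with the already-established ``only if'' implication. The remaining steps are immediate from the definition of $Q^i_j$ together with the multilinear structure of $u^i$, so I do not anticipate any technical obstacle beyond bookkeeping.
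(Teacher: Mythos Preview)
Your proof is correct and follows essentially the same argument as the paper: for the forward direction you specialize the NE inequality to pure strategies to obtain $\tilde Q(x^*)=0$, and for the converse you read off $u^i(s^i_j,x^{*-i})\leq u^i(x^*)$ and then average via the multilinear expansion of $u^i$. The only difference is that you explicitly invoke Nash's existence theorem to certify that the optimal value of \eqref{wenti2} is $0$ before concluding that a global minimizer satisfies $\tilde Q(x^*)=0$; the paper's proof silently assumes this step, so your version is in fact a bit more complete.
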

\begin{proof}
 If $x^*$ is an NE of game \eqref{wenti1}, we can get $u^i\left(x^*\right) \geq u^i\left(x^i, x^{*-i}\right)$ for all $x^i \in X^i$ and $i \in \mathbb{V}$. Then, letting $x^i_j=1$ for any $j\in\mathbb{M}_i$, we have $u^i(x^*)\geq u^i(s^i_j,x^{*-i})$. Thus, $x^*$ is a global minima of optimization problem \eqref{wenti2}.

 On the contrary, if $x^*$ is a global minima of optimization problem \eqref{wenti2}, one derives $u^i(x^*)\geq u^i(s^i_j,x^{*-i})$ for any $i\in\mathbb{V}$, $j\in\mathbb{M}_i$. It gives that $u^i(x^*)\geq\sum_{j \in \mathbb{M}_i} x_j^i u^i\left(s_j^i, x^{-i}\right)$ when $x^i \in X^i$. Therefore, $x^*$ is an NE of game \eqref{wenti1}.
\end{proof}

Thus, we can compute an NE for the non-monotone game \eqref{wenti1} by seeking a global minima of problem \eqref{wenti2}. For the convenience of subsequent analysis, problem \eqref{wenti2} can be rewritten as 
\begin{equation}\label{wenti3}
\begin{aligned}
\min_x & ~~\tilde{Q}(x) \\
\text { s.t. } & ~~x\in\Omega\cap\Psi,
\end{aligned}
\end{equation}
in which $\Omega=\{x:g^i_{1j}(x^i_j)=-x^i_j\leq0,~g^i_{2j}(x^i_j)=x^i_j-1\leq0,~i\in\mathbb{V},~j\in\mathbb{M}_i\}$ and $\Psi=\{x:h(x)=\col(\sum_{j \in \mathbb{M}_1} x_j^1-1,\cdots,\sum_{j \in \mathbb{M}_N} x_j^N-1)=\mathbf{0}_N\}$. To ensure the boundedness of $\Omega$, we add the trivial constraints $x^i_j\leq1$ for any $i\in\mathbb{V},~j\in\mathbb{M}_i$.
\section{Main Results}
\subsection{Design of Adaptive Neurodynamic Approach}
In this subsection, an adaptive neurodynamic approach (ANA) for the optimization problem \eqref{wenti2} is proposed as follows: 
\begin{align}\label{gs1}
\dot{x}(t)\in-\xi\Big(G\big(x(t)\big)\Big)\nabla_{x}\tilde{Q}(x)-R(x(t)),
\end{align}
where $R(x(t))=\zeta(x(t))\Big(\partial G(x(t))+\zeta(x(t))\partial H(x(t))\Big)$ makes sure that players' strategies will enter the constraints, in which $\zeta(x(t))=\int_{0}^{t}\sign{\varepsilon(x(s))}\mathrm{d}s$, $\varepsilon(x(t))$ $=G(x(t))+H(x(t))$, $G(x)=\sum_{i=1}^{N}\sum_{j=1}^{m_i}(\max\{0,g^i_{1j}(x^i_j)\}+\max\{0,g^i_{2j}(x^i_j)\})$,~$H(x)=\|h(x)\|$;
$\xi:[0,+\infty)\rightarrow[0,1]$ is given below
\begin{equation*}\label{gs3}
\xi(t)= \left\{\begin{aligned}
&0,~~~~~~~~~~~~~~~~~t>1,\\
&1-\nu,
~~~~~~~0\leq t\leq1.\\
\end{aligned}
\right.
\end{equation*}
\begin{rem}
    Although the non-monotone game \eqref{wenti1} has been transformed as an optimization problem \eqref{wenti3} with a continuous, differentiable cost function, the non-convexity of the cost function obstructs the global convergence of most approaches \cite{li2019distributed,jiang2022second,wei2023distributed}. Even for the convergence to critical point, the second-order sufficiency conditions required by Lagrangian methods \cite{xia2005recurrent,xia2023collaborative,liu2016collective} are not satisfied for such cost function. Thus, in this paper, requiring only the boundedness of $\Omega$, the ANA \eqref{gs1} with adaptive penalty method is proposed and the global convergence to the critical point set is demonstrated. Then, based on a hybrid swarm intelligence algorithm under the framework of CNA, we will prove its global optimal solution search ability with probability one.
\end{rem}
\subsection{Finite-Time Convergence to the Constraint Set}
Some important properties of ANA \eqref{gs1} are analyzed before the discussion on its convergence to the critical point set.

\begin{thm}\label{thm11}
For any $x(0)\in\mathcal{R}^{m}$, the state $x(t)$ of ANA \eqref{gs1} exists globally and is bounded.
\end{thm}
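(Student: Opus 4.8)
My plan to prove Theorem~\ref{thm11} proceeds in three stages: (i) put \eqref{gs1} in a well-posed form and get a local solution, (ii) bound the two terms on the right-hand side well enough to exclude finite-time blow-up, and (iii) close the argument with an energy function that stays bounded uniformly in $t$.

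First I would recast \eqref{gs1} as an autonomous differential inclusion by appending $\zeta$ as a state variable: $\dot\zeta(t)=\sign\big(G(x(t))+H(x(t))\big)$, $\zeta(0)=0$. Since $\varepsilon(x)=G(x)+H(x)\ge0$ always, $\dot\zeta\in\{0,1\}$, hence $0\le\zeta(t)\le t$. The resulting right-hand side is, after Filippov regularization of the jump of $\xi$ at argument value $1$ and of the $\sign$ in $\dot\zeta$, upper semicontinuous with nonempty convex compact values: $\nabla_x\tilde Q$ is continuous, and $G$ (a sum of terms $\max\{0,\text{affine}\}$) and $H=\|h(\cdot)\|$ (a norm of an affine map) are globally Lipschitz and convex, so their Clarke subdifferentials $\partial G,\partial H$ are usc with convex compact values. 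Standard existence results for differential inclusions then give a local absolutely continuous solution on a maximal interval $[0,T_{\max})$.

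Next I would bound the two terms. For the cost-gradient term: $\xi\big(G(x)\big)=0$ whenever $G(x)>1$, and the sublevel set $\{x:G(x)\le1\}$ lies in the compact box $K:=[-1,2]^m$ (each nonnegative summand of $G$ is then $\le1$, forcing $x^i_j\in[-1,2]$); since $\nabla_x\tilde Q$ is continuous, $\|\xi(G(x))\nabla_x\tilde Q(x)\|\le M_1:=\max_{y\in K}\|\nabla_x\tilde Q(y)\|$ for \emph{all} $x\in\mathcal R^m$. For the penalty term, Lipschitz continuity of $G,H$ gives $C_G:=\sup_{x}\sup_{\eta\in\partial G(x)}\|\eta\|<\infty$ and likewise $C_H<\infty$, so $\|R(x(t))\|\le\zeta(t)\big(C_G+\zeta(t)C_H\big)\le t(C_G+tC_H)$. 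Hence $\|\dot x(t)\|\le M_1+t(C_G+tC_H)$ on $[0,T_{\max})$, a bound that does not depend on $\|x(t)\|$; therefore $x$ cannot escape in finite time and $T_{\max}=+\infty$, i.e.\ the solution exists globally. For uniform boundedness I would fix any $x^\dagger\in\Omega\cap\Psi$ (nonempty, e.g.\ the point whose $i$-th block is $\frac1{m_i}\mathbf 1_{m_i}$) and set $V(t)=\tfrac12\|x(t)-x^\dagger\|^2$. For a.e.\ $t$, $\dot V(t)=-\xi(G(x))(x-x^\dagger)^\top\nabla_x\tilde Q(x)-\zeta(t)(x-x^\dagger)^\top\eta_G-\zeta(t)^2(x-x^\dagger)^\top\eta_H$ for measurable selections $\eta_G\in\partial G(x),\ \eta_H\in\partial H(x)$; convexity of $G,H$ with $G(x^\dagger)=H(x^\dagger)=0$ yields $(x-x^\dagger)^\top\eta_G\ge G(x)\ge0$ and $(x-x^\dagger)^\top\eta_H\ge H(x)\ge0$, and as $\zeta\ge0$ the penalty contributions are $\le0$. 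Thus $\dot V(t)\le-\xi(G(x))(x-x^\dagger)^\top\nabla_x\tilde Q(x)$, which is $\le0$ whenever $x(t)\notin K$ (there $\xi=0$), while whenever $x(t)\in K$ one trivially has $V(t)\le V_K:=\tfrac12\max_{y\in K}\|y-x^\dagger\|^2$. A last-exit-from-$K$ argument then gives $V(t)\le\max\{V(0),V_K\}$ for every $t$: if $x(t)\in K$ this is immediate; otherwise let $\tau=\sup\{s\le t:x(s)\in K\}$ — if this set is empty then $\dot V\le0$ a.e.\ on $[0,t]$ so $V(t)\le V(0)$, and if not then $x(\tau)\in K$ by continuity and $\dot V\le0$ a.e.\ on $(\tau,t)$ so $V(t)\le V(\tau)\le V_K$. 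Hence $\sup_{t\ge0}\|x(t)\|\le\|x^\dagger\|+\sqrt{2\max\{V(0),V_K\}}<\infty$.

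The step I expect to be the real crux is the boundedness argument in (iii), and specifically the interplay between the a priori unbounded adaptive gain $\zeta(t)$ and the boundedness of $x$: a naive Grönwall estimate only controls $x$ on finite intervals. It works out cleanly only because $\xi$ deactivates the (polynomially growing) cost-gradient term exactly on the complement of the compact set $K$, so the energy $V$ can increase only while $x$ is already confined to $K$, with the penalty term never contributing a positive rate. A secondary, more technical obstacle is the well-posedness in (i): the right-hand side of \eqref{gs1} is discontinuous (through $\xi$ and through the $\sign$ defining $\zeta$) and $\zeta$ is a functional of the past trajectory, so one must work with the augmented Filippov inclusion rather than a classical ODE.
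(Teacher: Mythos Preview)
Your proposal is correct, and the core mechanism matches the paper's: the switch $\xi(G(x))$ kills the cost-gradient term outside a compact set, while convexity of $G$ and $H$ (together with $G(x^\dagger)=H(x^\dagger)=0$ and $\zeta\ge0$) makes the penalty term dissipative for the energy $\tfrac12\|x-x^\dagger\|^2$. The paper's organization differs slightly: it chooses a ball $\mathbb B(\bar x,R_2)$ large enough to contain both $\{G\le1\}$ and $x(0)$, proves by contradiction that the local trajectory cannot exit this ball (via the same convexity inequality you use), and then infers global existence from boundedness plus the extension theorem---so boundedness and existence are obtained in one stroke. You instead first secure global existence directly via the explicit bound $\|\dot x(t)\|\le M_1+t(C_G+tC_H)$, which is independent of $\|x\|$, and only afterwards establish uniform-in-$t$ boundedness through a last-exit-from-$K$ argument with the fixed box $K=[-1,2]^m$. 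Both routes are sound; the paper's is marginally shorter since it need not decouple existence from boundedness, while your intermediate bound on $\|\dot x\|$ and the concrete choice of $K$ make the estimates somewhat more transparent.
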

\begin{proof}
Note that $\xi$ is continuous on $\mathcal{R}$ and $\partial G(x)$ is upper semicontinuous with non-empty convex compact values. Thus, ANA \eqref{gs1} is a set-valued map with non-empty convex compact values. Based on Theorem 2.3 in \cite{aubin2009differential}, a local state $x(t)$ exists for $t\in[0,T_{\max})$. Then, for the local state $x(t)$, there is $\kappa(t)\in\partial G(x(t))$ and $\eta(t)\in\partial H(x(t))$ satisfying
\begin{align}\label{gs2122}
\dot{x}(t)= & -\xi(G(x(t)))\nabla_{x}\tilde{Q}(x)-\zeta(x(t))(\kappa(t)+\zeta(x(t))\eta(t)),
\end{align}
for a.e. $t\in[0,T_{\max})$. Based on the boundedness of $\Omega$, we can find an $R_{1}>0$ satisfying
\begin{equation}\label{gs6}
  \Omega\subset\{x:0\leq G(x)\leq1\}\subset \mathbb{B}(\bar{x},R_{1}),
\end{equation}
in which $\mathbb{B}(\bar{x},R_{1})=\{x:\|x-\bar{x}\|<R_{1}\}$ with $\bar{x}\in\text{int}(\Omega)\cap\Psi$.
According to \eqref{gs6}, there exists an $R_{2}$ satisfying that
\begin{equation}\label{gs7}
 \{\Omega\cup\{x(0)\}\}\subseteq \mathbb{B}(\bar{x},R_{2}).
\end{equation}
Then, considering the definition of $\xi(\cdot)$, we have
$\xi(G(\tilde{x}))=0,~\text{when}~G(\tilde{x})>1,$ i.e.,
\begin{equation}\label{eq5}
 \xi(G(\tilde{x}))=0,~~\text{if}~\tilde{x}\notin \mathbb{B}(\bar{x},R_{2}).
\end{equation}
Then, we demonstrate that for any $t\in[0,T_{\max})$, we can get $x(t)\in \mathbb{B}(\bar{x},R_{2})$. Using proof by contradiction, we assume we can find $T_{0},~\tau_{0}>0$ such that $\|x(T_{0})-\bar{x}\|=R_{2}$ and $\|x(t)-\bar{x}\|>R_{2}$ for any $t\in(T_{0},T_{0}+\tau_{0}]$. Next, for any $t\in(T_{0},T_{0}+\tau_{0}]$, there is
\begin{equation}\label{eq6}
\xi(G(x(t)))=0.
\end{equation}
Based on (\ref{eq6}), differentiating $\|x(t)-\bar{x}\|^{2}/2$, we can get
\begin{align}\label{gs9}
\frac{\mathrm{d}}{\mathrm{d}t}\frac{\|x(t)-\bar{x}\|^{2}}{2} =&\zeta(x(t))(\bar{x}-x(t))^{\top}(\kappa(t)+\zeta(x(t))\eta(t)),
\end{align}
for a.e. $t\in(T_{0},T_{0}+\tau_{0}]$.
Based on the convexity of $g^i_{1j}(x)$ and $g^i_{2j}(x)$, we can derive that $G(x)$ is convex. Thus, (\ref{gs9}) can be simplified into
\begin{equation*}
  \begin{split}
  &\frac{\mathrm{d}}{\mathrm{d}t}\frac{\|x(t)-\bar{x}\|^{2}}{2}\\
\leq&\zeta(x(t))(G(\bar{x})-G(x(t)))+(\zeta(x(t)))^{2}(H(\bar{x})-H(x(t)))\\
 =&-\zeta(x(t))(G(x(t)))-(\zeta(x(t)))^{2}H(x(t))\leq0.
\end{split}
\end{equation*}
This means that $\|x(T_{0}+\tau_{0})-\bar{x}\|\leq\|x(T_{0})-\bar{x}\|=R_{2},$
which is contrary to $\|x(T_{0}+\tau_{0})-\bar{x}\|>R_{2}$. Thus, for any $t\in[0,T_{\max})$, there is $x(t)\in \mathbb{B}(\bar{x},R_{2})$. Based on the Extension Theorem in \cite{aubin2009differential}, global existence and boundedness of state $x(t)$ are guaranteed.
\end{proof}
\begin{thm}\label{thm22}
For any $x(0)\in\mathcal{R}^m$, the state of ANA \eqref{gs1} will enter the constraint set in finite time and remain there thereafter.
\end{thm}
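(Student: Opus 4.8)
The plan is to use the adaptive gain $\zeta(x(t))$ as a penalty multiplier that grows without bound while the state is infeasible, and to take $D(t)=\tfrac12\,\mathrm{dist}\bigl(x(t),\Omega\cap\Psi\bigr)^2$ as a Lyapunov function. Since $G,H\ge 0$ and $\varepsilon(x)=G(x)+H(x)$ vanishes exactly on $\Omega\cap\Psi$, the state is infeasible precisely when $\varepsilon(x(t))>0$, in which case $\sign\varepsilon(x(t))=1$ and $\dot\zeta=1$; in particular, if $x(t)\notin\Omega\cap\Psi$ for all $t\ge 0$ then $\zeta(x(t))=t$. By Theorem~\ref{thm11} the trajectory stays in the compact ball $\mathbb{B}(\bar{x},R_2)$, on which $\|\xi(G(x))\nabla_{x}\tilde{Q}(x)\|\le M$ for some constant $M>0$.

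First I would derive a descent estimate for $D$. Writing $\Pi(x)$ for the metric projection of $x$ onto the nonempty closed convex set $\Omega\cap\Psi$, one has $\nabla\bigl(\tfrac12\mathrm{dist}(\cdot,\Omega\cap\Psi)^2\bigr)(x)=x-\Pi(x)$; substituting \eqref{gs2122} into $\dot D=\langle x-\Pi(x),\dot x\rangle$ and arguing as in the proof of Theorem~\ref{thm11} with $\bar x$ replaced by $\Pi(x(t))$ — using the convexity of $G$ and $H$, the identities $G(\Pi(x))=H(\Pi(x))=0$, and $\|x-\Pi(x)\|=\sqrt{2D}$ to bound the term coming from $\xi\nabla_x\tilde Q$ — I expect to obtain, for a.e.\ $t$,
\begin{equation*}
\dot D(t)\ \le\ M\sqrt{2D(t)}-\zeta(x(t))\,G(x(t))-\zeta(x(t))^{2}\,H(x(t)).
\end{equation*}
The decisive ingredient is a polyhedral error bound: since $\Omega$ is a box and $\Psi$ an affine subspace with $\Omega\cap\Psi=X\neq\varnothing$, Hoffman's lemma (equivalently, bounded linear regularity of $\{\Omega,\Psi\}$) gives a constant $c'>0$ with $G(x)+H(x)\ge c'\,\mathrm{dist}(x,\Omega\cap\Psi)$ for all $x$, using also $\mathrm{dist}(x,\Omega)\le G(x)$ and $\mathrm{dist}(x,\Psi)\le c_H H(x)$. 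Hence, once $\zeta(x(t))\ge 1$ so that $\zeta^2\ge\zeta$,
\begin{equation*}
\frac{d}{dt}\sqrt{D(t)}\ \le\ \frac{M-c'\,\zeta(x(t))}{\sqrt{2}}.
\end{equation*}

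I would then close the argument by contradiction. If $x(t)\notin\Omega\cap\Psi$ for all $t\ge 0$, then $\zeta(x(t))=t$, so for $t\ge T^{\ast}:=\max\{1,2M/c'\}$ the right-hand side above is at most $-M/\sqrt{2}<0$, which forces $\sqrt{D(t)}$ to become negative in finite time — impossible. Thus the state reaches $\Omega\cap\Psi$ at some finite time, and the same estimate (with $\sqrt{2D(T^{\ast})}\le R_2$) bounds the first feasibility time by $T^{\ast}+R_2/M$. For the statement that $x(t)$ remains in $\Omega\cap\Psi$ thereafter, I would use that $\zeta(x(\cdot))$ is non-decreasing: on any maximal interval $(a,b)$ along which $x(t)\notin\Omega\cap\Psi$ one has $x(a)\in\Omega\cap\Psi$, hence $D(a)=0$ and $\zeta(x(t))=\zeta(x(a))+(t-a)$ on $(a,b)$, so once $\zeta$ has exceeded $T^{\ast}$ the last displayed inequality makes $\tfrac{d}{dt}\sqrt{D}<0$ whenever $D>0$ and $D$ can never leave the value $0$; together with the finite first-feasibility bound this produces a finite $T_1$ with $x(t)\in\Omega\cap\Psi$ for all $t\ge T_1$.

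The main obstacle will be the coupling of the two penalty terms $-\zeta\partial G(x)$ and $-\zeta^{2}\partial H(x)$: taken separately, each controls only $\mathrm{dist}(x,\Omega)$ or $\mathrm{dist}(x,\Psi)$, and converting this into control of $\mathrm{dist}(x,\Omega\cap\Psi)$ with a uniform, linear modulus is exactly what the Hoffman-type error bound for the polyhedral pair $(\Omega,\Psi)$ supplies — without it the scheme would only guarantee asymptotic, not finite-time, reachability. A secondary technical point is the history dependence of $\zeta(x(t))$, which equals the Lebesgue measure of the infeasible time accumulated along the trajectory and therefore has to be tracked carefully in the invariance part.
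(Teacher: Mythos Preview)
Your argument is correct and takes a genuinely different route from the paper. The paper does \emph{not} use the squared distance to $\Omega\cap\Psi$; instead it introduces the Lyapunov function $V(t)=\zeta(x(t))^{-1}G(x(t))+H(x(t))$ and shows $\dot V\le -b_0<0$ by a two-case analysis: when $x(t)\notin\Omega$ it lower-bounds $\|\kappa(t)+\zeta\eta(t)\|$ via a coerciveness estimate $\langle\kappa(t),x(t)-\bar x\rangle>\bar G$ with $\bar x\in\mathrm{int}(\Omega)\cap\Psi$; when $x(t)\in\Omega\setminus\Psi$ it lower-bounds $\|\eta(t)\|$ by $\sqrt{\lambda_{\min}(CC^\top)}$. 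No Hoffman-type bound appears; the polyhedral structure is exploited only through these two separate pointwise estimates, and the coupling that worries you is handled precisely by splitting into the two cases rather than by a joint error bound.

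What each approach buys: your route is cleaner and coordinate-free --- a single inequality $\dot D\le M\sqrt{2D}-\zeta G-\zeta^2 H$ plus Hoffman's lemma replaces the entire case split, and the estimate on $\tfrac{d}{dt}\sqrt{D}$ gives an explicit feasibility time directly. The paper's route is more elementary and self-contained (no external regularity lemma), and its choice $V=\zeta^{-1}G+H$ has the nice feature that the $\zeta^{-1}$ weight automatically absorbs the differing powers of $\zeta$ in front of $\partial G$ and $\partial H$. Your invariance argument is essentially the same idea as the paper's (reapply the descent inequality on a hypothetical excursion interval), though the paper avoids the subtlety about $\tfrac{d}{dt}\sqrt{D}$ at $D=0$ by working with $V$, which is already linear in the residuals; your fix via ``$\dot D\le 0$ whenever $\zeta$ is past the threshold, so $D$ cannot leave $0$'' is adequate.
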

\begin{proof}
According to Theorem \ref{thm11}, there exists an $R_{2}>0,$ satisfying ${x(t)}\in \mathbb{B}(\bar{x},R_{2})$ for any $ t\geq0$. Also, the state $x(t)$ satisfies \eqref{gs2122} for a.e. $t\geq0$.
Since $x(t)$ is bounded and $\tilde{Q}$ is composed of polynomial functions, there exists an $l_{Q}>0$ such that
\begin{equation}\label{gs10}
  \|\nabla_{x}\tilde{Q}(x)\|\leq l_{Q}.
\end{equation}
As $x(t)$ is bounded and $\partial G$ is upper semicontinuous, we can find an $l_{G}>0$ such that
\begin{equation}\label{gs11}
  \|\partial G(x(t))\|:=\sup\{\|\kappa\|:\kappa\in\partial G(x)\}\leq l_{G},~\forall t\geq0.
\end{equation}
For convenience, denote $$C=\left[\begin{array}{llll}
\overbrace{1 \cdots 1}^{m_1} & & & \\
&\overbrace{1 \cdots 1}^{m_2} & & \\
& & \ddots & \\
& & & \overbrace{1 \cdots 1}^{m_N}
\end{array}\right],$$ and $d=\col(1,\cdots,1)$. Then, we can get $h(x)=Cx-d$ and matrix $C$ is full row rank. Denote $\lambda_{\min}$ as the minimal eigenvalue of $CC^{{\rm T}}.$ We can obtain that $\lambda_{\min}>0$. Selecting a sufficiently large constant $\bar{T}>0$, one can claim that a $T_{1}\geq\bar{T}$ exists such that $x(T_{1})\in\Omega\cap\Psi$. If it is incorrect, one obtains $x(t)\in \mathbb{B}(\bar{x},R_{2})\setminus(\Omega\cap\Psi)$ for any $t\geq\bar{T}$, which means $\zeta(x(t))\rightarrow+\infty$ when $t\rightarrow\infty$. Thus, we have
\begin{equation}\label{gs13}
  \left\{\begin{split}
     &b_1:=\zeta(x(\bar{T}))^{-1}l_{Q}-\frac{\bar{G}}{R_{2}}<0, \\
    &b_2:=l_{Q}+\zeta(x(\bar{T}))l_{G}-(\zeta(x(\bar{T})))^{2}\sqrt{\lambda_{\min}}<0,
   \end{split}
   \right.
\end{equation}
where $\bar{G}=\min_{i\in\mathbb{V},j\in\mathbb{M}_i}\{-g^{i}_{1j}(\bar{x}),-g^{i}_{2j}(\bar{x})\}>0$. Then, define
\begin{equation*}
\begin{split}
 V(t)=&\zeta(x(t))^{-1}G(x(t))+H(x(t)).
\end{split}
\end{equation*}
Then, we can prove that, for a.e. $t\geq\bar{T}$, there exists a $b_{0}>0$ such that
\begin{equation}\label{gs14}
  \frac{\mathrm{d}}{\mathrm{d}t}V(t)\leq-b_{0}.
\end{equation}
We derive for a.e. $t\geq\bar{T}$,
\begin{align}\label{gs15}
\nonumber\frac{\mathrm{d}}{\mathrm{d}t}V(t)=&-\zeta(x(t))^{-2}\sign{\varepsilon(x(t))}G(x(t))\\
&\nonumber+\zeta(x(t))^{-1}\langle\kappa(t),\dot{x}(t)\rangle+\langle\eta(t),\dot{x}(t)\rangle\\
\nonumber\leq&-\zeta(x(t))^{-1}\langle\zeta(x(t))\eta(t)+\kappa(t),\xi(G(x))\nabla_{x}\tilde{Q}(x)\\
&+\zeta(x(t))\kappa(t)+(\zeta(x(t)))^2\eta(t)\rangle.
\end{align}
Clearly, one can derive
\begin{equation}\label{2-1}
\mathbb{B}(\bar{x},R_{2})\setminus(\Omega\cap\Psi)=\{\mathbb{B}(\bar{x},R_{2})\setminus\Omega\}\cup\{\Omega\setminus\Psi\}.
\end{equation}
Therefore, one can demonstrate \eqref{gs14} merely by discussing the two cases below
\begin{equation*}
  \left\{\begin{split}
     \textit{\textbf{Case~i}}:~& x(t)\in \mathbb{B}(\bar{x},R_{2})\setminus\Omega, ~t\geq\bar{T},\\
     \textit{\textbf{Case~ii}}:~& x(t)\in\Omega\setminus\Psi, ~~~~~~~~~~t\geq\bar{T}.
   \end{split}
   \right.
\end{equation*}
\subsubsection*{$\textbf{Case~i}$} For $x(t)\in \mathbb{B}(\bar{x},R_{2})\setminus\Omega$, as $H(\cdot)$ is convex, we have
\begin{equation*}
\begin{split}
  &\langle\kappa(t)+\zeta(x(t))\eta(t),x(t)-\bar{x}\rangle\\
  \geq&\langle\kappa(t),x(t)-\bar{x}\rangle+\zeta(x(t))\big(H(x(t))-H(\bar{x})\big).
\end{split}
\end{equation*}
Based on the coerciveness of $G$, we can get
\begin{equation}\label{gs17}
\begin{split}
 \langle\kappa(t)+\zeta(x(t))\eta(t),x(t)-\bar{x}\rangle\geq\langle\kappa(t),x(t)-\bar{x}\rangle>\bar{G}.
\end{split}
\end{equation}
Thus, it yields
\begin{equation}\label{gs18}
  \big\|\kappa(t)+\zeta(x(t))\eta(t)\big\|\geq\frac{\bar{G}}{R_{2}}.
\end{equation}
Thus, from \eqref{gs15}, for a.e. $t\geq\bar{T}$, one derives that
\begin{equation*}
  \begin{split}
    &~\frac{\mathrm{d}}{\mathrm{d}t}V(t)\\
     \leq&-\zeta(x(t))^{-1}\langle\kappa(t)+\zeta(x(t))\eta(t),\xi\big(G\big(x(t)\big)\big)\nabla_{x}\tilde{Q}(x)\rangle\\
    &-\|\kappa(t)+\zeta(x(t))\eta(t)\|^{2} \\
       \leq&\|\kappa(t)+\zeta(x(t))\eta(t)\|\bigg(\zeta(x(t))^{-1}\|\nabla_{x}\tilde{Q}(x)\|\\
       &-\|\kappa(t)+\zeta(x(t))\eta(t)\|\bigg)\\
       \leq&\frac{\bar{G}}{R_{2}}\left(\zeta(x(\bar{T}))^{-1}l_{Q}-\frac{\bar{G}}{R_{2}}\right)<0.
   \end{split}
\end{equation*}
\subsubsection*{$\textbf{Case~ii}$}
For $x(t)\in\Omega\setminus\Psi$, we have $Cx(t)\neq d$.
As $x(t)\in\Omega$, there is $\mathbf{0}_m\in\partial G(x(t))$. Thus, for a.e. $t\geq\bar{T}$, \eqref{gs15} is reformulated as
\begin{equation*}
  \begin{split}
     &\frac{\mathrm{d}}{\mathrm{d}t}V(t)\\
     =&-\langle\eta(t),\xi\big(G\big(x(t)\big)\big)\nabla_{x}\tilde{Q}(x)+\zeta(x(t))\\
     &\times(\kappa(t)+\zeta(x(t))\eta(t))\rangle\\
       \leq&\|\eta(t)\|\left(\|\nabla_{x}\tilde{Q}(x)\|+\zeta(x(t))\|\kappa(t)\|-(\zeta(x(t)))^2\|\eta(t)\|\right).
   \end{split}
\end{equation*}
Based on \eqref{gs10} and \eqref{gs11}, we can get
\begin{equation}\label{gs20}
  \frac{\mathrm{d}}{\mathrm{d}t}V(t)\leq\|\eta(t)\|\bigg(l_{Q}+\zeta(x(t)) l_{G}-(\zeta(x(t)))^{2}\|\eta(t)\|\bigg).
\end{equation}
Since $\lambda_{\min}$ is the minimal eigenvalue, one derives
\begin{equation}\label{gs21}
  \|\eta(t)\|^{2}=\frac{(Cx-d)^{\top}CC^{\top}(Cx-d)}{\|Cx-d\|^{2}}\geq\lambda_{\min}.
\end{equation}
It means that $\|\eta(t)\|\geq\sqrt{\lambda_{\min}}.$ Based on \eqref{gs13} and \eqref{gs21}, for a.e. $t\geq\bar{T}$, formula \eqref{gs20} can be scaled as
\begin{equation}\label{gs22}
\begin{split}
  \frac{\mathrm{d}}{\mathrm{d}t}V(t)\leq&\sqrt{\lambda_{\min}}\bigg(l_{Q}+\zeta(x(\bar{T}))l_{G}\\
    &-(\zeta(x(\bar{T})))^{2}\sqrt{\lambda_{\min}}\bigg)<0.
\end{split}
\end{equation}
Let $$b_{0}=\min\{\bar{b}_{1},\bar{b}_{2}\},$$ in which $\bar{b}_{1}=b_{1}\bar{G}/R_{2}$ and $\bar{b}_{2}=\sqrt{\lambda_{\min}}b_{2}$. Therefore, for a.e. $t\geq\bar{T}$, it is clear that \eqref{gs14} holds. Next, integrating \eqref{gs14} from $\bar{T}$ to $t$, one derives
\begin{equation*}
  V(t)\leq V(\bar{T})-b_{0}(t-\bar{T}),
\end{equation*}
which is contrary to the nonnegativity of $V(t)$ for a sufficiently large $t$. Therefore, we can find an $T^{*}\geq\bar{T}$ satisfying $x(T^{*})\in\Omega\cap\Psi$.

At last, we demonstrate for any $t\geq T^{*}$, there is $x(t)\in\Omega\cap\Psi$. If it is incorrect, we can find $t_{1},t_{2}>0$ such that $t_{2}\geq t_{1}\geq T^{*}$, $x(t_{1})\in\Omega\cap\Psi$ and $x(t)\notin\Omega\cap\Psi$ for any $t\in(t_{1},t_{2}]$. According to a similar process, \eqref{gs14} still holds for $t\in(t_{1},t_{2}]$.
Integrating \eqref{gs14} from $t_{1}$ to $t$, one derives
\begin{equation*}
  V(t)\leq V(t_{1})-b_{0}(t-t_{1})=-b_{0}(t-t_{1})<0,
\end{equation*}
which is contrary to $V(t)\geq0$ for any $t\geq0$. Therefore, the state $x(t)$ will enter the constraint set in finite time and stay there thereafter.
\end{proof}
\begin{rem} 
Based on the adaptive penalty method, the state of neurodynamic approach \eqref{gs1} will reach the constraint set in finite time, where almost all of the researches by lagrangian methods can not ensure. Also, different from the exact penalty method in \cite{yan2016collective}, the penalty parameters in ANA \eqref{gs1} can be autonomously computed based on the intrinsic characteristic of the constraints.
\end{rem}
\subsection{Convergence Analysis}
Before analyzing the convergence of ANA \eqref{gs1}, the following lemma is necessary.
\begin{lem}\label{g90}
The derivative $\dot{x}(t)$ of the state $x(t)$ of ANA \eqref{gs1} is a.e. convergent to $0$, i.e., $\pi$-$\lim_{t\rightarrow\infty}\dot{x}(t)=0$ with $\pi$ being the Lebesgue measure.
\end{lem}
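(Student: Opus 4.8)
The plan is to invoke Theorem~\ref{thm22} so as to confine attention to the tail $t\ge T^{*}$, on which the penalty machinery of \eqref{gs1} freezes; to recognize the residual flow as a projected gradient flow for $\tilde{Q}$ on the convex set $K:=\Omega\cap\Psi$; and to read off $\int^{\infty}\|\dot{x}(s)\|^{2}\,\mathrm{d}s<\infty$ from the attendant energy dissipation, from which the $\pi$-limit follows by a Chebyshev estimate.

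First I would record, using Theorems~\ref{thm11}--\ref{thm22}, that $x(t)$ is bounded and that there is a finite $T^{*}$ with $x(t)\in K$ for all $t\ge T^{*}$, while $\dot{x}$ is bounded on the compact interval $[0,T^{*}]$ (using \eqref{gs10}--\eqref{gs11} and the boundedness of the remaining coefficients there); hence only the tail matters. For $t\ge T^{*}$ one has $G(x(t))=H(x(t))=0$, so $\varepsilon(x(t))=0$, and therefore $\zeta(x(t))\equiv\zeta^{*}:=\zeta(x(T^{*}))$ is constant and $\xi(G(x(t)))=\xi(0)=:\gamma>0$; thus \eqref{gs2122} collapses, for a.e.\ $t\ge T^{*}$, to $\dot{x}(t)=-\gamma\nabla_{x}\tilde{Q}(x(t))-p(t)$ with $p(t):=\zeta^{*}\kappa(t)+(\zeta^{*})^{2}\eta(t)$, $\kappa(t)\in\partial G(x(t))$, $\eta(t)\in\partial H(x(t))$.

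Next I would argue that $\tilde{Q}$ is a strict Lyapunov function along the tail. Each summand of $G$ depends on a single coordinate and equals its distance to $[0,1]$, so $\partial G(x)\subseteq N_{\Omega}(x)$ for $x\in\Omega$, and $\partial H(x)=\{C^{\top}u:\|u\|\le1\}\subseteq N_{\Psi}(x)$ for $x\in\Psi$; since $\Omega$ is a box and $\Psi$ an affine set meeting $\operatorname{int}\Omega$ (e.g.\ at the uniform profile), $N_{K}=N_{\Omega}+N_{\Psi}$, whence $p(t)\in N_{K}(x(t))$. Consequently $x(\cdot)|_{[T^{*},\infty)}$ is an absolutely continuous solution of $\dot{x}\in-\gamma\nabla_{x}\tilde{Q}(x)-N_{K}(x)$ from $x(T^{*})\in K$; as $\nabla_{x}\tilde{Q}$ is locally Lipschitz and $N_{K}=\partial\iota_{K}$ is maximal monotone, this inclusion admits a unique solution, which is the projected gradient flow. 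Writing $\dot{x}(t)=-\gamma\nabla_{x}\tilde{Q}(x(t))-v(t)$ with $v(t)=P_{N_{K}(x(t))}(-\gamma\nabla_{x}\tilde{Q}(x(t)))$ orthogonal to $\dot{x}(t)$ by the Moreau decomposition, I get
\begin{equation*}
\frac{\mathrm{d}}{\mathrm{d}t}\tilde{Q}(x(t))=\big\langle\nabla_{x}\tilde{Q}(x(t)),\dot{x}(t)\big\rangle=-\tfrac{1}{\gamma}\|\dot{x}(t)\|^{2}\le0\qquad(\text{a.e. }t\ge T^{*}).
\end{equation*}
Since $\tilde{Q}\ge0$, the map $t\mapsto\tilde{Q}(x(t))$ is nonincreasing and bounded below, hence convergent, and integrating from $T^{*}$ to $t$ and letting $t\to\infty$ yields $\int_{T^{*}}^{\infty}\|\dot{x}(s)\|^{2}\,\mathrm{d}s\le\gamma\,\tilde{Q}(x(T^{*}))<\infty$. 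Then for every $\epsilon>0$ and every $T\ge T^{*}$, $\pi(\{s\ge T:\|\dot{x}(s)\|>\epsilon\})\le\epsilon^{-2}\int_{T}^{\infty}\|\dot{x}(s)\|^{2}\,\mathrm{d}s\to0$ as $T\to\infty$, which is exactly $\pi$-$\lim_{t\rightarrow\infty}\dot{x}(t)=0$.

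The hard part will be the identification in the third paragraph. The containment $p(t)\in N_{K}(x(t))$ only gives $\langle p(t),\dot{x}(t)\rangle\le0$, and this inequality by itself merely \emph{lower}-bounds $\frac{\mathrm{d}}{\mathrm{d}t}\tilde{Q}(x(t))$; to obtain the genuine dissipation rate $\|\dot{x}\|^{2}/\gamma$ one must know that $x(\cdot)$ is \emph{the} slow solution of $\dot{x}\in-\gamma\nabla_{x}\tilde{Q}(x)-N_{K}(x)$, i.e.\ that $\langle v(t),\dot{x}(t)\rangle=0$, which rests on the uniqueness theory for Lipschitz perturbations of subdifferential flows. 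Some care is also needed to verify the subdifferential inclusions and the qualification $N_{\Omega\cap\Psi}=N_{\Omega}+N_{\Psi}$, and to dispatch the degenerate case $\zeta^{*}=0$ — which forces $x(0)\in K$ and the whole trajectory into $K$, so that $\dot{x}=-\gamma\nabla_{x}\tilde{Q}(x)$ is automatically tangent to $K$ a.e.\ and the same identity holds.
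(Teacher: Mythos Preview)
Your proof is correct, but it works harder than necessary at exactly the point you flag as ``the hard part.'' The paper obtains the orthogonality $\langle p(t),\dot{x}(t)\rangle=0$ by a one-line observation rather than by identifying $x$ with the slow solution of a projected gradient flow: since $G(x(t))\equiv0$ and $H(x(t))\equiv0$ for $t\ge T^{*}$, their time derivatives vanish, and the chain rule for convex functions (for convex $\phi$ and absolutely continuous $x$, $(\phi\circ x)'(t)=\langle v,\dot{x}(t)\rangle$ for \emph{every} $v\in\partial\phi(x(t))$, a.e.\ $t$) gives $\langle\kappa(t),\dot{x}(t)\rangle=0=\langle\eta(t),\dot{x}(t)\rangle$ directly for the particular selections appearing in \eqref{gs2122}. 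Hence
\[
\frac{\mathrm{d}}{\mathrm{d}t}\tilde{Q}(x(t))=\langle\nabla_{x}\tilde{Q}(x(t)),\dot{x}(t)\rangle=\langle\nabla_{x}\tilde{Q}(x(t))+\zeta\kappa(t)+\zeta^{2}\eta(t),\dot{x}(t)\rangle=-\|\dot{x}(t)\|^{2},
\]
with no appeal to $N_{K}$, constraint qualifications, Moreau decomposition, or uniqueness theory for Lipschitz perturbations of maximal monotone operators. (The paper also takes $\xi(0)=1$, so your $\gamma$ is simply $1$.) Your route through the projected-gradient-flow structure is valid and makes explicit why the penalty selections act as a normal-cone term, which is conceptually nice; the paper's route is considerably shorter because it needs only the convexity of $G$ and $H$, not the full variational identification. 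The tail argument you give via Chebyshev's inequality is essentially the same as the paper's contradiction argument for the $\pi$-limit.
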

\begin{proof}
    From Theorem \ref{thm22}, one derives for any $t\geq T^*$, we have $x(t)\in\Omega\cap\Psi$. It implies \begin{equation*}
  G\big(x(t)\big)\equiv0,~H\big(x(t)\big)\equiv0,~\forall t\geq T^*.
\end{equation*}
Therefore, for a.e. $t\geq T^*$, we obtain
\begin{equation*}
\xi\big(G(x(t))\big)=1,~\frac{\mathrm{d}}{\mathrm{d}t}G\big(x(t)\big)=0,~\frac{\mathrm{d}}{\mathrm{d}t}H(x(t))=0.
\end{equation*}
From \eqref{gs2122}, one derives
\begin{equation*}
  \begin{split}
  &\frac{\mathrm{d}}{\mathrm{d}t}\tilde{Q}(x(t))\\
  =&\frac{\mathrm{d}}{\mathrm{d}t}\tilde{Q}(x(t))+\zeta(x(t))\frac{\mathrm{d}}{\mathrm{d}t}G\big(x(t)\big)+(\zeta(x(t)))^{2}\frac{\mathrm{d}}{\mathrm{d}t}H\big(x(t)\big)\\
       =&-\|\dot{x}(t)\|^{2}.
   \end{split}
\end{equation*}
Integrating it from $T^*$ to $\infty$ gives
\begin{align}\label{shiziq} \lim_{t\rightarrow\infty}\big(\tilde{Q}(x(t))\big)-\tilde{Q}(x(T^*))=-\int_{T^*}^{\infty}\|\dot{x}(t)\|^{2}\mathrm{d}t.
\end{align}
Based on \eqref{shiziq}, due to the boundedness of $x(t)$, we can find an $M\in[0,+\infty)$ satisfying
\begin{equation}\label{shiz2}
 \lim_{T\rightarrow\infty}\int_{T^*}^{T}\|\dot{x}(t)\|^{2}\mathrm{d}t=\int_{T^*}^{\infty}\|\dot{x}(t)\|^{2}\mathrm{d}t=M<\infty.
\end{equation}
If $\dot{x}(t)\nrightarrow0$ a.e., then for any $T>T^*$, we can find a $\bar{\phi}>0$ such that
$\pi\{t:\dot{x}(t)>\bar{\phi},~t\in[T,\infty)\}\geq\bar{\phi}.$ Next, we obtain
\begin{equation}\label{weizq}
\int_{T}^{\infty}\|\dot{x}(t)\|^{2}\mathrm{d}t>\bar{\phi}^{3}.
\end{equation}
However, from \eqref{shiz2}, one derives for any $\phi>0$, there is $T^{c}>T^*$ such that
\begin{equation}\label{shize}
  M-\int_{T^*}^{T^c}\|\dot{x}(t)\|^{2}\mathrm{d}t=\int_{T^c}^{\infty}\|\dot{x}(t)\|^{2}\mathrm{d}t\leq\phi.
\end{equation}
Letting $\phi=\bar{\phi}^{3}$, it gives that $\int_{T^c}^{\infty}\|\dot{x}(t)\|^{2}\mathrm{d}t\leq\bar{\phi}^{3}$. It is contrary to \eqref{weizq}. Thus, $\pi$-$\lim_{t\rightarrow\infty}\dot{x}(t)=0$.
\end{proof}
\begin{thm}\label{thm3}
The state $x(t)$ of ANA \eqref{gs1} is convergent to the critical point set of problem \eqref{wenti3}, that is, $\lim_{t\rightarrow\infty}\mathrm{dist}(x(t),\mathcal{C})=0.$
\end{thm}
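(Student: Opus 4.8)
The plan is to work entirely on the time interval $[T^*,\infty)$, where Theorem~\ref{thm22} guarantees $x(t)\in\Omega\cap\Psi$, so that the penalty terms vanish and the dynamics reduce to a projected-gradient-type flow for $\tilde Q$ on the convex set $\Omega\cap\Psi$. First I would make precise the structure of $R(x(t))$ restricted to the constraint set: since $G(x(t))\equiv 0$ and $H(x(t))\equiv 0$ for $t\ge T^*$, we have $\mathbf 0_m\in\partial G(x(t))$ and the subdifferential term $\kappa(t)+\zeta(x(t))\eta(t)$ appearing in \eqref{gs2122} acts (after multiplication by $\zeta(x(t))$) as a normal-cone element enforcing membership in $\Omega\cap\Psi$. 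Combined with $\xi(G(x(t)))=1$, equation \eqref{gs2122} on $[T^*,\infty)$ reads $\dot x(t)=-\nabla_x\tilde Q(x(t))-n(t)$ with $n(t)$ lying in the normal cone $N_{\Omega\cap\Psi}(x(t))$; equivalently $\dot x(t)=\Pi_{T_{\Omega\cap\Psi}(x(t))}(-\nabla_x\tilde Q(x(t)))$, the projection of the negative gradient onto the tangent cone. This is the standard projected dynamical system, and its equilibria are exactly the points satisfying the first-order (KKT) stationarity conditions, i.e.\ the critical point set $\mathcal C$ of \eqref{wenti3}.

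Next I would invoke Lemma~\ref{g90}, which gives $\pi\text{-}\lim_{t\to\infty}\dot x(t)=0$. The strategy is then an invariance/limit-point argument: by Theorem~\ref{thm11} the trajectory is bounded, so its $\omega$-limit set $\mathcal L$ is nonempty, compact, and contained in $\Omega\cap\Psi$. Take any $x^\infty\in\mathcal L$ and a sequence $t_k\to\infty$ with $x(t_k)\to x^\infty$. Using the almost-everywhere convergence $\dot x(t)\to 0$ together with upper semicontinuity of the set-valued right-hand side of \eqref{gs1} (closedness of the graph of $x\mapsto -\nabla_x\tilde Q(x)-N_{\Omega\cap\Psi}(x)$), one passes to the limit along a suitable subsequence of times where $\|\dot x(t)\|$ is small to conclude $\mathbf 0\in -\nabla_x\tilde Q(x^\infty)-N_{\Omega\cap\Psi}(x^\infty)$, i.e.\ $x^\infty\in\mathcal C$. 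Hence $\mathcal L\subseteq\mathcal C$. Finally, since $x(t)$ is bounded and $\omega$-limit sets of bounded trajectories attract, $\lim_{t\to\infty}\mathrm{dist}(x(t),\mathcal L)=0$, and therefore $\lim_{t\to\infty}\mathrm{dist}(x(t),\mathcal C)=0$.

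The step I expect to be the main obstacle is the passage to the limit to certify $x^\infty\in\mathcal C$ rigorously, because $\dot x(t)\to 0$ holds only almost everywhere (in Lebesgue measure), not pointwise, so one cannot simply evaluate the inclusion at $t_k$. The remedy is to choose the approximating times carefully: for each $k$, within a shrinking window around $t_k$ pick a time $\tau_k$ at which both $x(\tau_k)$ is close to $x^\infty$ (by continuity of $x(\cdot)$) and $\|\dot x(\tau_k)\|$ is small (possible because the set where $\|\dot x\|$ exceeds any threshold has finite total measure by \eqref{shiz2}, hence its intersection with long intervals is sparse). Then $x(\tau_k)\to x^\infty$, $\dot x(\tau_k)\to 0$, and along these times \eqref{gs2122} gives $\nabla_x\tilde Q(x(\tau_k))+n(\tau_k)=-\dot x(\tau_k)\to 0$ with $n(\tau_k)\in N_{\Omega\cap\Psi}(x(\tau_k))$; since $\nabla_x\tilde Q$ is continuous and the normal-cone map has closed graph on the bounded set, $n(\tau_k)$ is bounded and any cluster point $n^\infty$ satisfies $n^\infty\in N_{\Omega\cap\Psi}(x^\infty)$ and $\nabla_x\tilde Q(x^\infty)+n^\infty=\mathbf 0$, which is exactly the defining condition of $\mathcal C$. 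A secondary technical point worth spelling out is the identification $R(x(t))=\zeta(x(t))(\kappa(t)+\zeta(x(t))\eta(t))\in N_{\Omega\cap\Psi}(x(t))$ on $[T^*,\infty)$: this follows because $\partial(\max\{0,g\})$ and $\partial\|h\|$ evaluated on the feasible set are precisely generators of the normal cones of $\Omega$ and $\Psi$ respectively, and $N_{\Omega\cap\Psi}=N_\Omega+N_\Psi$ under the evident constraint qualification (the $\bar x\in\mathrm{int}(\Omega)\cap\Psi$ used already in Theorem~\ref{thm11}).
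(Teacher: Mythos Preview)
Your proposal is correct and follows essentially the same approach as the paper: reduce the dynamics to the feasible regime $[T^*,\infty)$, use Lemma~\ref{g90}, carefully select times $\tau_k$ (the paper does this via the measure-theoretic sets $\mathcal H_\beta\setminus\mathcal D_\beta$) at which simultaneously $x(\tau_k)\to x^\infty$ and $\dot x(\tau_k)\to 0$, and then pass to the limit in the differential inclusion using upper semicontinuity/closed-graph arguments. The only cosmetic difference is that you phrase the constraint term via the normal cone $N_{\Omega\cap\Psi}$, whereas the paper keeps it as $\theta\,\partial G(x^*)+\theta^2\partial H(x^*)$ with the frozen penalty weight $\zeta\equiv\theta$; since on the feasible set $\theta\,\partial G(x)\subseteq N_\Omega(x)$ and $\theta^2\partial H(x)\subseteq N_\Psi(x)$, your formulation is a (slightly weaker but equivalent for the stated conclusion) repackaging of the same inclusion.
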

\begin{proof}
Based on Theorem \ref{thm22}, we can find a $T^{*}>0$ such that for all $t\geq T^{*}$, we have $x(t)\in\Omega\cap\Psi$. It implies $\zeta(x(t))\equiv \theta$ with a fixed $\theta>0$, $G(x(t))=0$, and $\xi(G(x(t)))=1$, for all $t\geq T^{*}$. Thus, for a.e. $t\geq T^*$, the ANA \eqref{gs1} is rewritten by
 \begin{equation}\label{gs341}
  \dot{x}(t)=-\nabla_{x}\tilde{Q}(x(t))-\theta(\kappa(t)+\theta\eta(t)).
\end{equation}
As $x(t)$ is bounded, at least one Accumulation Point (AP) of $x(t)$ exists when $t\rightarrow\infty$. Denote $\mathcal{K}$ as the AP set. Thus, the only work left is to demonstrate $\mathcal{K}\subseteq\mathcal{C}$, which implies $\lim_{t\rightarrow\infty}\mathrm{dist}(x(t),\mathcal{C})=0$.

Based on the absolute continuity of x(t), according to \cite{aubin2009differential}, one can get for all $x^{*}\in\mathcal{K}$, $x^{*}$ is an almost AP of $x(t)$ when $t\rightarrow\infty$. Thus, we derive for any $\beta\in\{1,2,\cdots\}$, we can find a $T^{1}_{\beta}>0$ such that
\begin{equation}\label{shizd}
  \pi\big(\mathcal{H}_{\beta}\cap[T^{1}_{\beta},\infty)\big)=\infty,~\mathcal{H}_{\beta}=\{t:||x(t)-x^{*}||\leq1/\beta\}.
\end{equation}
Moreover, based on Lemma \ref{g90}, $\pi$-$\lim_{t\rightarrow\infty}\dot{x}(t)=0$. Thus, one derives for any $\beta\in\{1,2,\cdots\}$, there is a $T^{2}_{\beta}>0$ such that
\begin{equation}\label{shizg3}
  \pi\big(\mathcal{D}_{\beta}\cap[T^{2}_{\beta},\infty)\big)<1/\beta,~\mathcal{D}_{\beta}=\{t:||x(t)||>1/\beta\}.
\end{equation}
Combining \eqref{shizd} and \eqref{shizg3}, letting $T_{\beta}=\max\{T^{1}_{\beta},T^{2}_{\beta}\}$, one can obtain
$\pi(\mathcal{P}_{\beta})=\infty,$ in which $\mathcal{P}_{\beta}=(\mathcal{H}_{\beta}\backslash\mathcal{D}_{\beta})\cap[T_{\beta},\infty)$. In conclusion, there is a sequence $\{t_{\beta}\}\subseteq\bigcup_{\beta=1}^{\infty}\mathcal{P}_{\beta}$ and $\lim_{\beta\rightarrow\infty}t_{\beta}=\infty$ such that
\begin{equation}\label{shizi5r}
 \lim_{\beta\rightarrow\infty}\dot{x}(t_{\beta})=0,~\lim_{\beta\rightarrow\infty}x(t_{\beta})=x^{*}\in\Omega\cap\Psi.
\end{equation}
Then, based on \eqref{gs341}, it gives that
\begin{equation}\label{gs28}
  \dot{x}(t_{\beta})=-\nabla_{x}\tilde{Q}(x(t_\beta))-\theta(\kappa(t_{\beta})+\theta\eta(t_{\beta})).
\end{equation}
Next, according to \eqref{shizi5r}, we have
\begin{equation}\label{gs30}
\lim_{\beta\rightarrow\infty}\nabla_{x}\tilde{Q}(x(t_\beta))=\nabla_{x} \tilde{Q}(x^{*}).
\end{equation}
Combining the boundedness of $x(t_{\beta})$ and the upper semicontinuity of $\partial G(x)$, one derives that $\{\kappa(t_{\beta})\}$ is bounded. Thus, there is a subsequence $\{\kappa(t_{\beta_{k}})\}$ such that
\begin{equation*}
\lim\limits_{k\rightarrow+\infty}\kappa(t_{\beta_{k}})=\kappa^{*}\in\partial G(x^{*}).
\end{equation*}
Similarly, we obtain
\begin{equation*}
\lim\limits_{k\rightarrow+\infty}\eta(t_{\beta_{k}})=\eta^{*}\in\partial H(x^{*}).
\end{equation*}
Based on \eqref{gs28} and \eqref{gs30}, when $\beta\rightarrow\infty$, one derives
\begin{equation}\label{gs29}
\begin{split}
  \mathbf{0}_m\in&-\nabla_{x} \tilde{Q}(x^{*})-\theta\Big(\partial G(x^{*})+\theta\partial H(x^{*})\Big).
\end{split}
\end{equation}
It implies $x^{*}\in\mathcal{C}$. Thus, the state $x(t)$ of ANA \eqref{gs1} is convergent to the critical point set.
\end{proof}
\section{An Adaptive Collaborative Neurodynamic Approach for Global Optimal Solutions}
To ensure global and exact NE computation, an ACNA is developed by combining ANA \eqref{gs1} with particle swarm algorithm.

In the ACNA, a group of ANA \eqref{gs1} will be employed to search for the critical points, and a meta-heuristic principle will be utilized to re-initialize the states based on historical experiences and the obtained critical points. 

In this paper, the particle swarm algorithm is employed as the meta-heuristic principle, which is presented as below: for $i\in\{1,\cdots,r\}$,
\begin{equation}\label{pso1}
\begin{aligned}
\mathbf{v}_i(k+1)=&\alpha \mathbf{v}_i(k)+c_1 l_1\left(p_{i,\text {pbest }}(k)-\mathbf{x}_i(k)\right) \\
&+c_2 l_2\left(p_{\text {gbest }}(k)-\mathbf{x}_i(k)\right), \\
\mathbf{x}_i(k+1)=&\mathbf{x}_i(k)+\mathbf{v}_i(k+1),
\end{aligned}
\end{equation}
where $r$ denotes the number of ANA \eqref{gs1} employed in the system, $k$ denotes the iteration step, $\mathbf{x}_i$ is the re-initialized state of $i$th ANA \eqref{gs1}, $\mathbf{v}_i$ denotes velocity of the $i$th ANA \eqref{gs1}, $p_{i,\text {pbest }}$ represents the best state of $i$th ANA \eqref{gs1}, $p_{\text {gbest }}$ represents system's best state, $\alpha,~c_1$, and $c_2$ are the weights, $l_1$ and $l_2$ are random constants remained in $[0,1]$. The iteration of the ACNA is shown as follows:
\begin{enumerate}[Step 1:]
\item Input $\alpha,~ c_1,~ c_2$, error tolerance $\epsilon$, termination criterion $\tilde{T}$, and the number of ANA \eqref{gs1} $r$.
\item Initialize $k=0, T=0, \mathbf{x}_i(0)$, and $\mathbf{v}_i(0)$.\\
Initialize Individually best state $p_{i,\text {pbest }}(0)=\mathbf{x}_i(0)$.\\
Initialize group best state $p_{\text {gbest }}(0)=\mathbf{x}_{i^*}(0)$\\
with $i^*=\arg \min _i \tilde{Q}\left(\mathbf{x}_i(0)\right)$.
\item If $T \leq \tilde{T}$, Run main loop (For $i\in\{1,\cdots,r\}$):
\begin{enumerate}[Step 3-1:]
\item Run the $i$th ANA \eqref{gs1} from $\mathbf{x}_i(k)$ until reaching the critical point $\bar{\mathbf{x}}_i(k)$.
\item If $\tilde{Q}\left(p_{i,\text {pbest }}(k)\right)>\tilde{Q}\left(\bar{\mathbf{x}}_i(k)\right)$, then $p_{i,\text {pbest }}(k+1)=\bar{\mathbf{x}}_i(k)$; otherwise, $p_{i,\text {pbest }}(k+1)=p_{i,\text {pbest }}(k)$.
\item If $\tilde{Q}\left(p_{\text {gbest }}(k)\right)>\tilde{Q}\left(p_{i^*,\text {pbest }}(k+1)\right)$ with $i^*=\arg \min _i \tilde{Q}\left(p_{i,\text {pbest }}(k+1)\right)$, then $p_{\text {gbest }}(k+1)=p_{i^*,\text {pbest }}(k+1)$; otherwise, $p_{\text {gbest }}(k+1)=p_{\text {gbest }}(k)$.
\item If $\left\|\tilde{Q}\left(p_{\text {gbest }}(k+1)\right)-\tilde{Q}\left(p_{\text {gbest }}(k)\right)\right\| \leq \epsilon$, then $T=T+1$; otherwise, $T=0$.
\item Obtain $\mathbf{x}_i(k+1)$ by \eqref{pso1}, $k=k+1$, then move to Step 3-1.
\end{enumerate}
\item If $T>\tilde{T}$, then Stop.
\end{enumerate}

The ACNA is a stochastic approach, which inherits the a.e. convergence from its precursors to ensure global optimal solution search \cite{yan2016collective}.
\begin{thm}
Based on the Steps 1-4, the ACNA is convergent to a global optimal solution of optimization problem \eqref{wenti3} with probability one.
\end{thm}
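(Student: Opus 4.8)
\emph{Proof proposal.} The plan is to combine the deterministic convergence of each embedded ANA (Theorem \ref{thm3}) with the stochastic re-initialization performed by the particle swarm update \eqref{pso1}, in the spirit of the collaborative neurodynamic framework. First I would record the basic monotonicity: since $\tilde{Q}$ is continuous and nonnegative on the compact set $\Omega\cap\Psi$, its global minimum value $\tilde{Q}^{\star}$ exists (in fact $\tilde{Q}^{\star}=0$, because an NE exists by Nash's theorem and the Lemma identifies NEs with global minima of \eqref{wenti3}); let $\mathcal{G}=\{x\in\Omega\cap\Psi:\tilde{Q}(x)=\tilde{Q}^{\star}\}$, which is nonempty and compact. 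From Steps 3-2 and 3-3 the sequence $\{\tilde{Q}(p_{\text{gbest}}(k))\}_{k\ge 0}$ is non-increasing and bounded below by $\tilde{Q}^{\star}\ge 0$, hence convergent to some limit $L\ge\tilde{Q}^{\star}$; the whole task is to show $L=\tilde{Q}^{\star}$ with probability one.

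Next I would introduce the basin of attraction $\mathcal{B}=\{x_{0}\in\mathcal{R}^{m}:\text{the trajectory of \eqref{gs1} from }x_{0}\text{ converges to }\mathcal{G}\}$. By Theorem \ref{thm3} every ANA trajectory converges to the critical point set $\mathcal{C}$, and $\mathcal{G}\subseteq\mathcal{C}$ is a nonempty attractor, so $\mathcal{B}$ contains an open neighbourhood of $\mathcal{G}$ and therefore has positive Lebesgue measure. Because the PSO iterates $\mathbf{x}_{i}(k)$ remain in a fixed bounded region and the random coefficients $l_{1},l_{2}$ in \eqref{pso1} are drawn from $[0,1]$ with positive density, there is a constant $\delta>0$ such that, conditioned on the history $\mathcal{F}_{k}$ up to step $k$, the event $E_{k}=\{\mathbf{x}_{i}(k+1)\in\mathcal{B}\text{ for some }i\in\{1,\dots,r\}\}$ satisfies $\mathbb{P}(E_{k}\mid\mathcal{F}_{k})\ge\delta$. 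Applying the conditional Borel--Cantelli lemma, $E_{k}$ occurs for infinitely many $k$ almost surely.

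To conclude, on any realization where $E_{k}$ holds for some index $i$, Theorem \ref{thm3} guarantees that the $i$th ANA run in Step 3-1 converges to $\mathcal{G}$, so the recorded critical point $\bar{\mathbf{x}}_{i}(k)$ satisfies $\tilde{Q}(\bar{\mathbf{x}}_{i}(k))\le\tilde{Q}^{\star}+\epsilon$; Steps 3-2 and 3-3 then propagate $\tilde{Q}(p_{\text{gbest}}(k'))\le\tilde{Q}^{\star}+\epsilon$ for all subsequent $k'$. Since this happens along a subsequence tending to infinity and $\{\tilde{Q}(p_{\text{gbest}}(k))\}$ is monotone, $L\le\tilde{Q}^{\star}+\epsilon$; letting $\epsilon\downarrow 0$ gives $L=\tilde{Q}^{\star}$ almost surely. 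Boundedness of $\{p_{\text{gbest}}(k)\}$ together with continuity of $\tilde{Q}$ then forces every accumulation point of $p_{\text{gbest}}(k)$ into $\mathcal{G}$, i.e.\ the ACNA converges to a global optimal solution of \eqref{wenti3} with probability one; the counter $T$ and threshold $\tilde{T}$ merely furnish the finite-time stopping once the global best has stabilized.

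\textbf{Main obstacle.} The delicate step is the uniform lower bound $\mathbb{P}(E_{k}\mid\mathcal{F}_{k})\ge\delta>0$. It requires (i) that $\mathcal{B}$ has nonempty interior, which follows from $\mathcal{G}$ being a compact attractor of the locally bounded differential inclusion \eqref{gs1} but should be stated carefully, and (ii) that the map \eqref{pso1}, although history-dependent and degenerate in corner cases (e.g.\ when $\mathbf{x}_{i}(k)=p_{i,\text{pbest}}(k)=p_{\text{gbest}}(k)$ and $\mathbf{v}_i(k)=\mathbf{0}_m$), still spreads positive probability mass over a neighbourhood of $\mathcal{B}$ of volume bounded away from zero; this is typically secured by the inertial/velocity term keeping the conditional support of $\mathbf{x}_{i}(k+1)$ of positive volume, or, if needed, by adjoining a small uniformly distributed perturbation to the re-initialization in \eqref{pso1}.
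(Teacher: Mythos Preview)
Your proposal is correct and follows essentially the same route as the paper: monotonicity of $\tilde{Q}(p_{\text{gbest}}(k))$ from Step~3, positive Lebesgue measure of the basin of attraction of the global minimizer (which lies in $\mathcal{C}$ by Theorem~\ref{thm3}), and then the probability-one conclusion via the PSO re-initialization. The paper's own proof is a three-line sketch that records the monotonicity, asserts that each attraction region has positive measure, and defers the remaining probabilistic argument to \cite{yan2016collective}; your write-up supplies exactly the Borel--Cantelli reasoning that reference encapsulates, and your ``Main obstacle'' paragraph is an honest accounting of the same uniform-lower-bound hypothesis the paper inherits from \cite{yan2016collective} without spelling out.
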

\begin{proof}
    It can be proved by a similar process as the proof in \cite{yan2016collective}. Based on Theorem \ref{thm3} and Step 3, the cost function value is monotonically nonincreasing with respect to the group best state sequence $\{p_{\text {gbest }}(k)\}_{k=1}^{\text{end}}$. Let $\mathcal{C}_i(k)$ be the set of critical point by running the $i$th ANA \eqref{gs1}. According to Theorem \ref{thm3}, one can get the global optimal solution $x_{\text {global }}^* \in \mathcal{C}_i(k)$ for $i\in\{1,\cdots,r\}$. Besides, Lebesgue measure of each attracted region of each $x_{\text {critical }}^* \in \mathcal{C}_i(k)$ or $x_{\text {global }}^* \in \mathcal{C}_i(k)$ is greater than 0. According to \cite{yan2016collective}, the ACNA is convergent to a global minima of problem \eqref{wenti3} with probability one.
\end{proof}
\section{Numerical Examples}
Consider a standard rock-paper-scissor game with three players. The payoff matrix is formulated as follows:
\begin{table}[htpb]
\centering
\caption{Payoff matrix for rock-paper-scissor game}
\begin{threeparttable}
      \begin{tabular}{|c|c|c|c|}
\hline \diagbox{P1}{P2}& R & P & S \\
\hline R & (0,0,0) & (-1,2,-1) & (1,-2,1) \\
\hline P & (2,-1,-1) & (1,1,-2) & (0,0,0) \\
\hline S & (-2,1,1) & (0,0,0) & (-1,-1,2) \\
\hline
\end{tabular}
      \begin{tablenotes}
		\item P3: R
     \end{tablenotes}
\end{threeparttable}\\
\begin{threeparttable}
      \begin{tabular}{|c|c|c|c|}
\hline \diagbox{P1}{P2}& R & P & S \\
\hline R & (-1,-1,2) & (-2,1,1) & (0,0,0) \\
\hline P & (1,-2,1) & (0,0,0) & (-1,2,-1) \\
\hline S & (0,0,0) & (2,-1,-1) & (1,1,-2) \\
\hline
\end{tabular}
      \begin{tablenotes}
		\item P3: P
     \end{tablenotes}
\end{threeparttable}\\
\begin{threeparttable}
      \begin{tabular}{|c|c|c|c|}
\hline \diagbox{P1}{P2}& R & P & S \\
\hline R & (1,1,-2) & (0,0,0) & (2,-1,-1) \\
\hline P & (0,0,0) & (-1,-1,2) & (-2,1,1) \\
\hline S & (-1,2,-1) & (1,-2,1) & (0,0,0) \\
\hline
\end{tabular}
      \begin{tablenotes}
		\item P3: S
     \end{tablenotes}
\end{threeparttable}
\end{table}

In the above normal-form game, the only mixed-strategy NE is $((\frac{1}{3},\frac{1}{3},\frac{1}{3}),(\frac{1}{3},\frac{1}{3},\frac{1}{3}),(\frac{1}{3},\frac{1}{3},\frac{1}{3}))$. Then, the payoff functions of the three players can be calculated as:\\
$u^1(x)=-x^1_Rx^2_Rx^3_P+x^1_Rx^2_Rx^3_S-x^1_Rx^2_Px^3_R-2x^1_Rx^2_Px^3_P+x^1_Rx^2_Sx^3_R+2x^1_Rx^2_Sx^3_S+2x^1_Px^2_Rx^3_R+x^1_Px^2_Rx^3_P+x^1_Px^2_Px^3_R-x^1_Px^2_Px^3_S-x^1_Px^2_Sx^3_P-2x^1_Px^2_Sx^3_S-2x^1_Sx^2_Rx^3_R-x^1_Sx^2_Rx^3_S+2x^1_Sx^2_Px^3_P+x^1_Sx^2_Px^3_S-x^1_Sx^2_Sx^3_R+x^1_Sx^2_Sx^3_P,$\\
$u^2(x)=-x^2_Rx^1_Rx^3_P+x^2_Rx^1_Rx^3_S-x^2_Rx^1_Px^3_R-2x^2_Rx^1_Px^3_P+x^2_Rx^1_Sx^3_R+2x^2_Rx^1_Sx^3_S+2x^2_Px^1_Rx^3_R+x^2_Px^1_Rx^3_P+x^2_Px^1_Px^3_R-x^2_Px^1_Px^3_S-x^2_Px^1_Sx^3_P-2x^2_Px^1_Sx^3_S-2x^2_Sx^1_Rx^3_R-x^2_Sx^1_Rx^3_S+2x^2_Sx^1_Px^3_P+x^2_Sx^1_Px^3_S-x^2_Sx^1_Sx^3_R+x^2_Sx^1_Sx^3_P,$\\
$u^3(x)=-x^3_Rx^1_Rx^2_P+x^3_Rx^1_Rx^2_S-x^3_Rx^1_Px^2_R-2x^3_Rx^1_Px^2_P+x^3_Rx^1_Sx^2_R+2x^3_Rx^1_Sx^2_S+2x^3_Px^1_Rx^2_R+x^3_Px^1_Rx^2_P+x^3_Px^1_Px^2_R-x^3_Px^1_Px^2_S-x^3_Px^1_Sx^2_P-2x^3_Px^1_Sx^2_S-2x^3_Sx^1_Rx^2_R-x^3_Sx^1_Rx^2_S+2x^3_Sx^1_Px^2_P+x^3_Sx^1_Px^2_S-x^3_Sx^1_Sx^2_R+x^3_Sx^1_Sx^2_P.$

It is clear that the payoff functions are non-convex and the corresponding pseudo-gradient is non-monotone. Thus, most of the gradient descent-based approaches lose their efficiency \cite{he2022distributed,yuan2024event,Rao2023,Ye2020,li2024logical,huang2024distributed,deng2022nash,Wu2020,Zhou2005,Bianchi2021,deng2018distributed,meng2022attack,deng2021distributed,franci2021training,Wang2021}. To testify the effectiveness of the ACNA, the following parameter setting is given: $$\alpha=0.4+((0.9 - 0.4)*(1.0-(\frac{\text{iteration}}{\text{max\_iteration}}))),$$
$c_1=c_2=2,~\epsilon=0.1,~\tilde{T}=100,~r=10.$ Then, randomly initialize player's state $x^i_j\in[-10,10]$ for $i\in\{1,2,3\}$, $j\in\{P,R,S\}$. Run the ACNA and the results are given in Figs. \ref{tu2a}-\ref{tu2d}.
\begin{figure}[bht]
\begin{center}
	\subfigure[The state of ACNA.\label{tu2a}]
	{\includegraphics[width=0.492\linewidth]{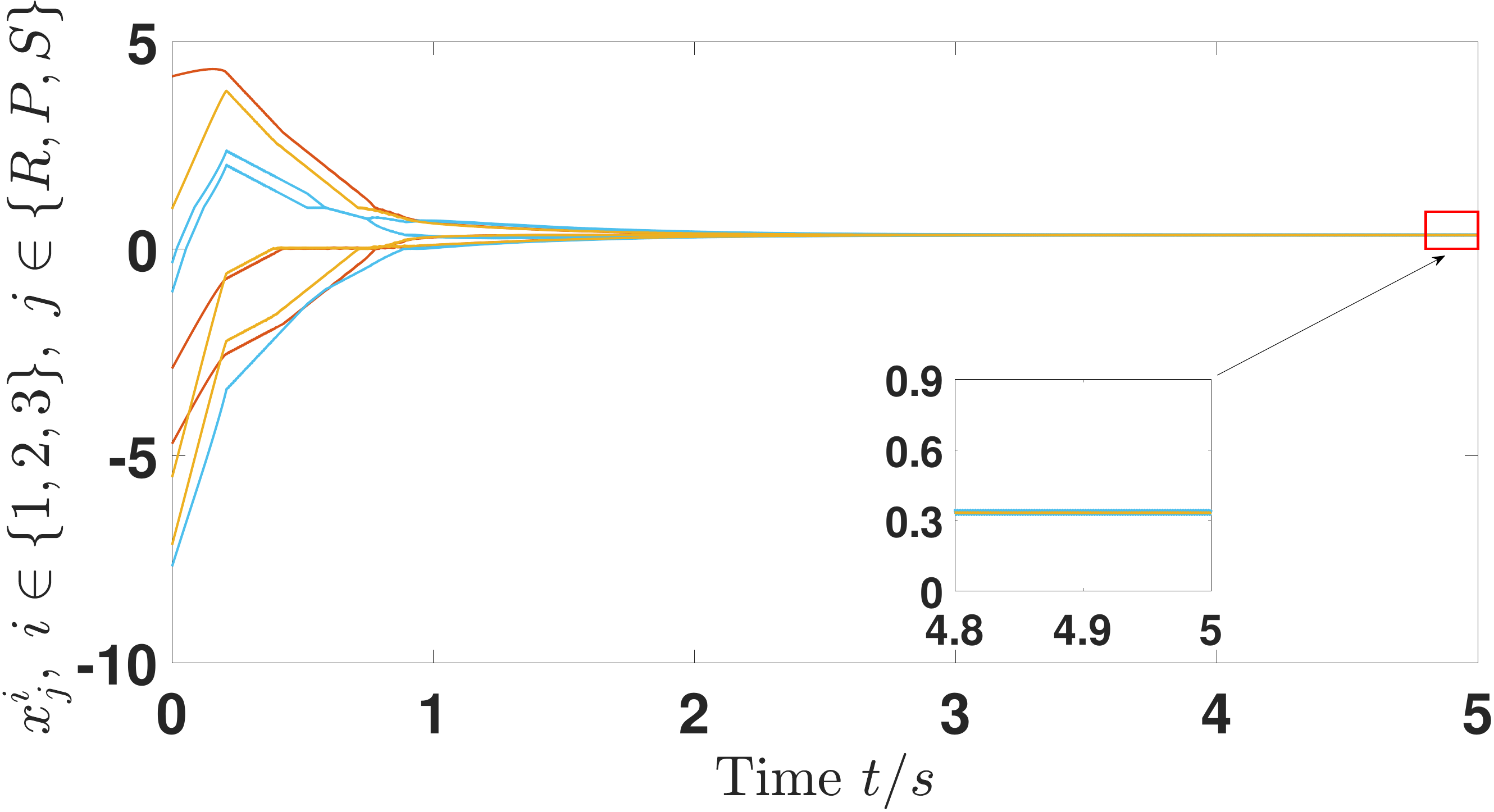}}
	\subfigure[The cost value of the transformed optimization problem $\tilde{Q}(x)$.\label{tu2b}]
	{\includegraphics[width=0.492\linewidth]{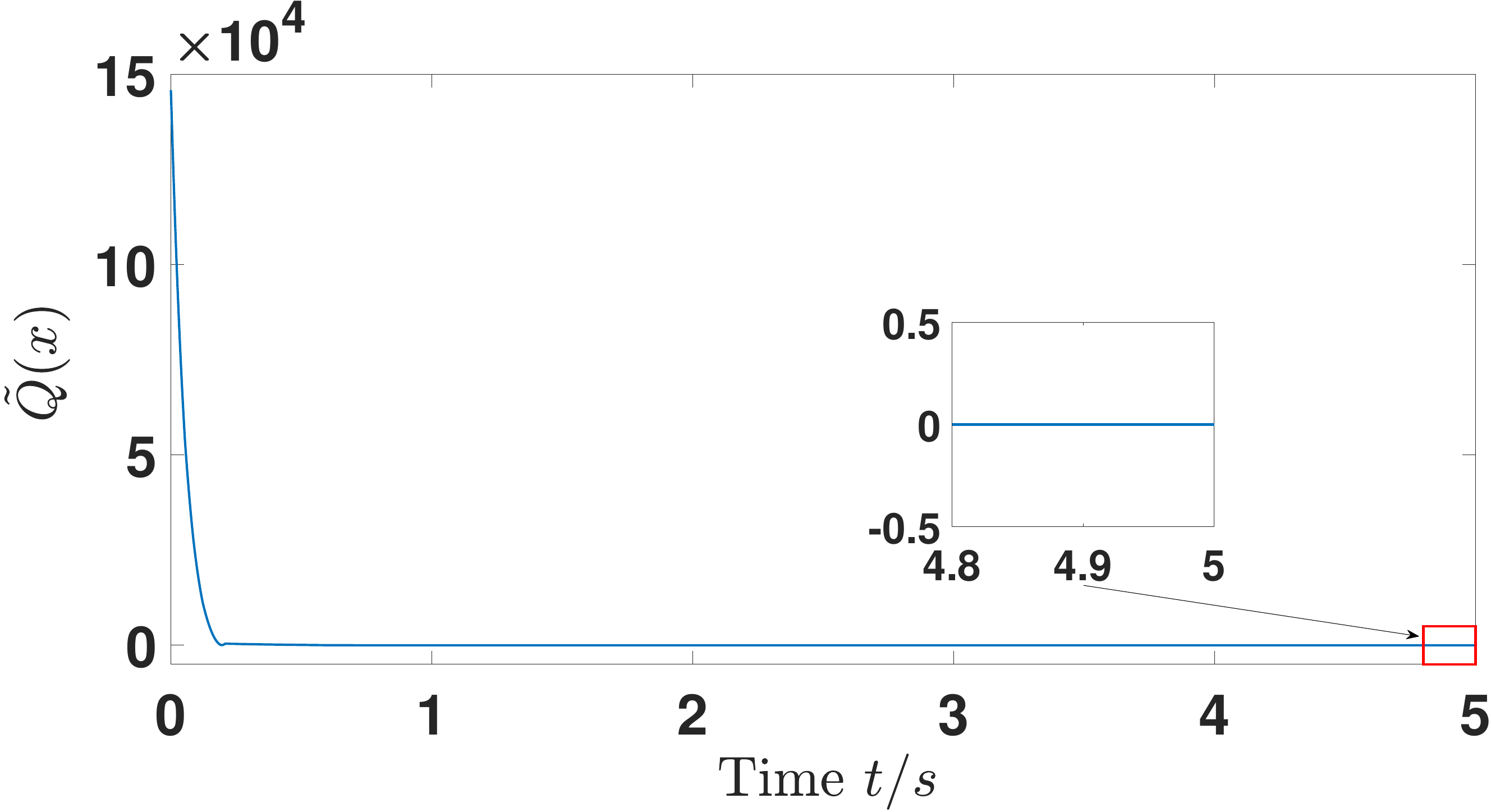}}

	\subfigure[The value of $G(x)$.\label{tu2c}]
	{\includegraphics[width=0.492\linewidth]{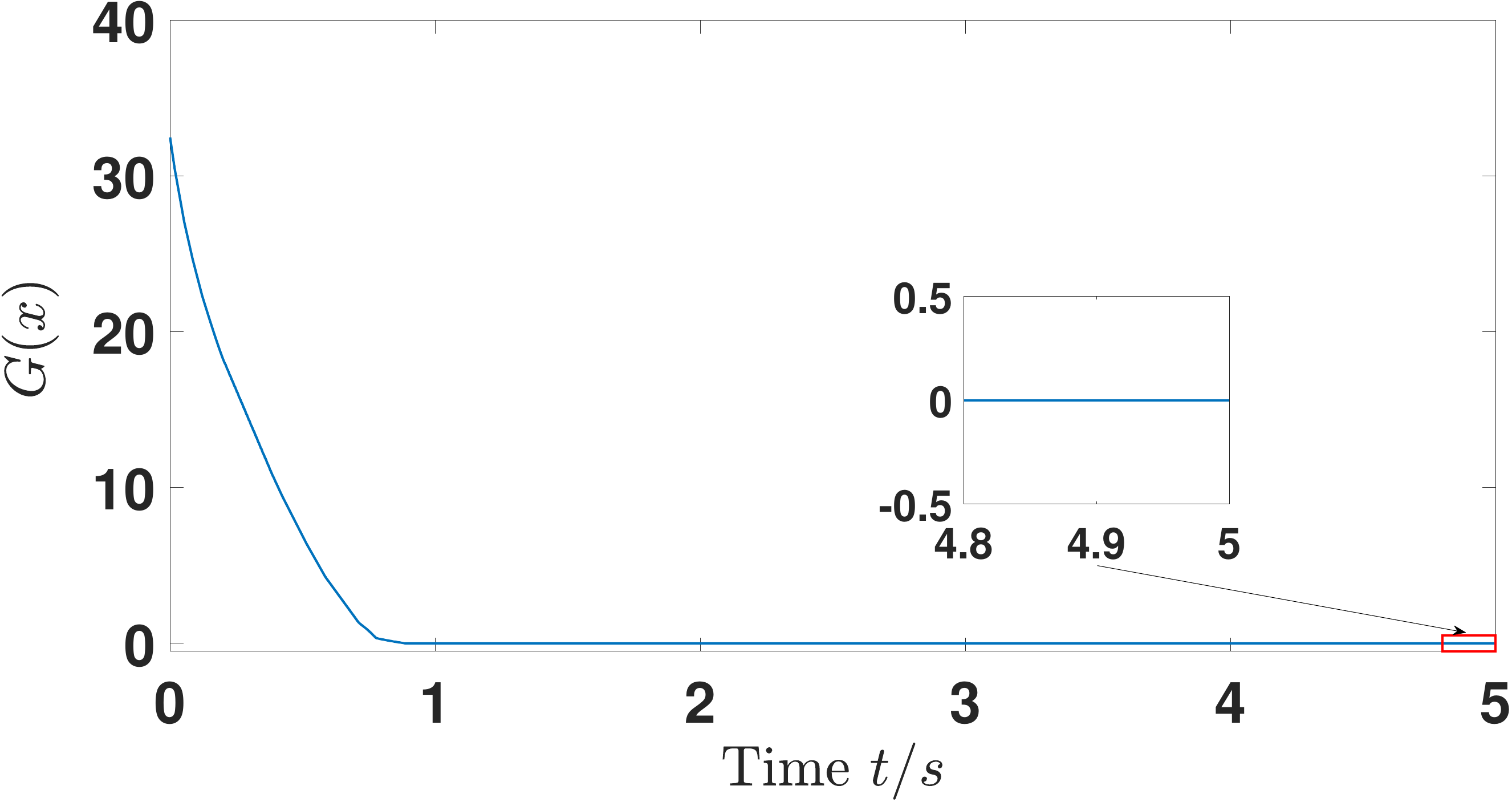}}
	\subfigure[The value of $H(x)$.\label{tu2d}]
	{\includegraphics[width=0.492\linewidth]{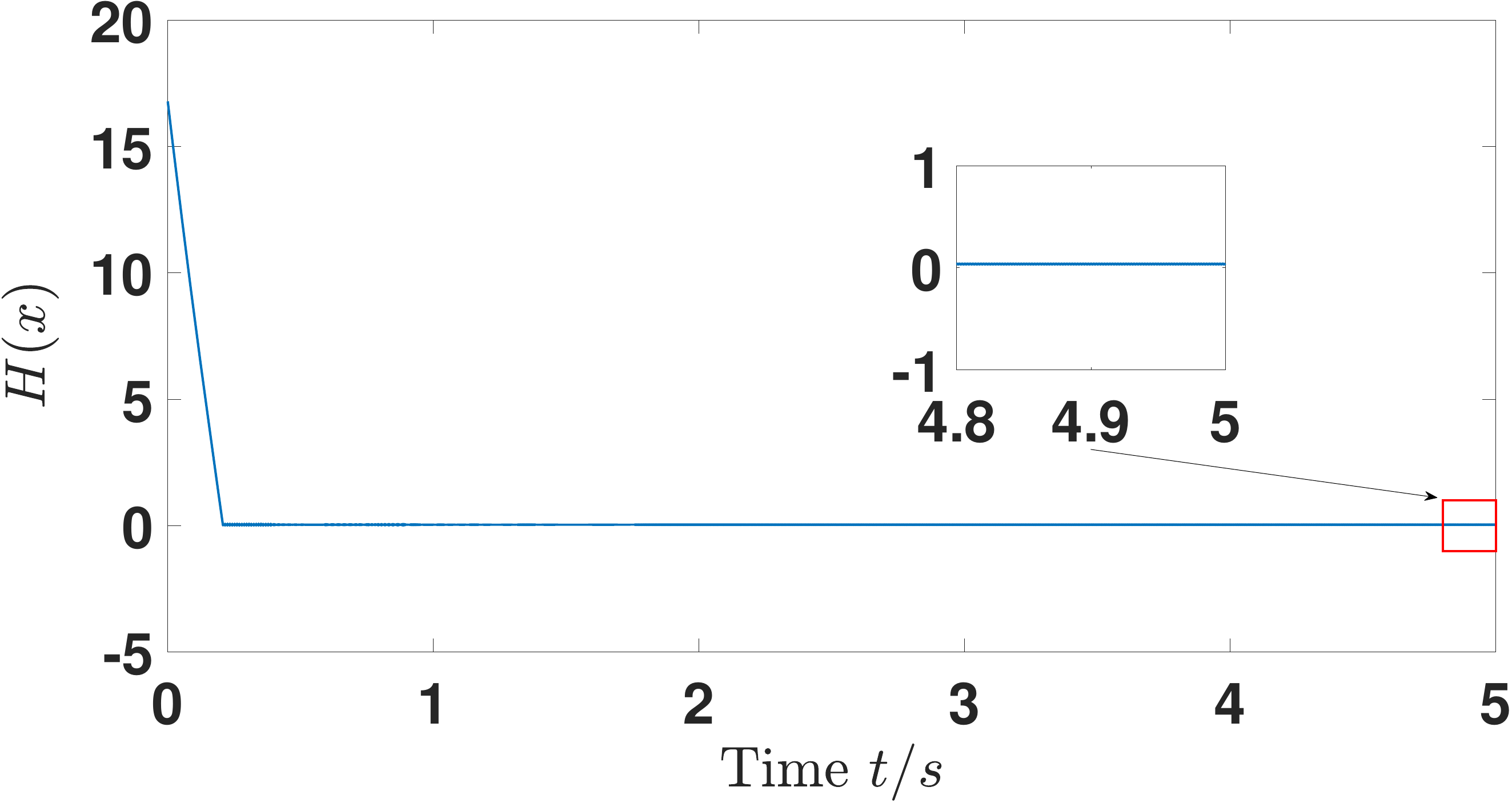}}
\end{center}
\end{figure}

Fig. \ref{tu2a} displays the evolution of players' strategies by the ACNA, which clearly shows that all the players' strategies converge to $1/3$. Also, the cost value of the transformed optimization problem $\tilde{Q}(x)$ converges to zero as presented in Fig. \ref{tu2b}. Then, to testify that players' strategies enter the constraint set, Figs. \ref{tu2c} and \ref{tu2d} are depicted to show the values of $G(x)$ and $H(x)$, which are both convergent to zero. It demonstrates that the ACNA is able to ensure global and exact NE computation for $N$-player non-monotone normal-form game.

\section{Conclusion}
In this paper, a novel ACNA is introduced for efficiently computing the NE in general $N$-player normal-form games with mixed strategies. By transforming the normal-form game with non-monotone payoff functions into an optimization problem and leveraging the adaptive penalty method, ACNA ensures the global and exact computation of an NE without relying on the second-order sufficiency conditions of Lagrangian methods or the complex penalty parameter estimation required by exact penalty methods. The use of particle swarm algorithm further guarantees global convergence to an exact NE with probability one. Finally, numerical simulation is provided to demonstrate the effectiveness of the proposed method, confirming its potential as a robust and efficient tool for solving large-scale games. In the future, the authors will attempt to apply the ACNA to search for more kinds of equilibrium, such as perfect equilibrium or proper equilibrium.
\bibliographystyle{IEEEtran}    
\bibliography{cdc}

\begin{thebibliography}{10}
\providecommand{\url}[1]{#1}
\csname url@samestyle\endcsname
\providecommand{\newblock}{\relax}
\providecommand{\bibinfo}[2]{#2}
\providecommand{\BIBentrySTDinterwordspacing}{\spaceskip=0pt\relax}
\providecommand{\BIBentryALTinterwordstretchfactor}{4}
\providecommand{\BIBentryALTinterwordspacing}{\spaceskip=\fontdimen2\font plus
\BIBentryALTinterwordstretchfactor\fontdimen3\font minus
  \fontdimen4\font\relax}
\providecommand{\BIBforeignlanguage}[2]{{%
\expandafter\ifx\csname l@#1\endcsname\relax
\typeout{** WARNING: IEEEtran.bst: No hyphenation pattern has been}%
\typeout{** loaded for the language `#1'. Using the pattern for}%
\typeout{** the default language instead.}%
\else
\language=\csname l@#1\endcsname
\fi
#2}}
\providecommand{\BIBdecl}{\relax}
\BIBdecl

\bibitem{bierman1998game}
H.~S. Bierman and L.~F. Fernandez, \emph{Game Theory with Economic
  Applications}.\hskip 1em plus 0.5em minus 0.4em\relax Addison-Wesley Reading,
  MA, 1998.

\bibitem{amit2024games}
R.~Amit, ``Games in normal form: Applications in {OM},'' in \emph{Game Theory
  with Applications in Operations Management}.\hskip 1em plus 0.5em minus
  0.4em\relax Springer, 2024, pp. 57--84.

\bibitem{ritzberger1995evolutionary}
K.~Ritzberger and J.~W. Weibull, ``Evolutionary selection in normal-form
  games,'' \emph{Econometrica: Journal of the Econometric Society}, pp.
  1371--1399, 1995.

\bibitem{li2024survey}
H.~Li, W.~Huang, Z.~Duan, D.~H. Mguni, K.~Shao, J.~Wang, and X.~Deng, ``A
  survey on algorithms for {N}ash equilibria in finite normal-form games,''
  \emph{Computer Science Review}, vol.~51, p. 100613, 2024.

\bibitem{nash1950equilibrium}
J.~F. Nash~Jr, ``Equilibrium points in n-person games,'' \emph{Proceedings of
  the National Academy of Sciences}, vol.~36, no.~1, pp. 48--49, 1950.

\bibitem{lemke1964equilibrium}
C.~E. Lemke and J.~T. Howson, Jr, ``Equilibrium points of bimatrix games,''
  \emph{Journal of the Society for industrial and Applied Mathematics},
  vol.~12, no.~2, pp. 413--423, 1964.

\bibitem{rosenmuller1971generalization}
J.~Rosenm{\"u}ller, ``On a generalization of the {L}emke--{H}owson algorithm to
  noncooperative {N}-person games,'' \emph{SIAM Journal on Applied
  Mathematics}, vol.~21, no.~1, pp. 73--79, 1971.

\bibitem{garcia1973simplicial}
C.~B. Garcia, C.~E. Lemke, and H.~L{\"u}ethi, ``Simplicial approximation of an
  equilibrium point for non-cooperative {N}-person games,'' \emph{Mathematical
  Programming}, pp. 227--260, 1973.

\bibitem{herings2001differentiable}
P.~J.-J. Herings and R.~J. Peeters, ``A differentiable homotopy to compute
  {N}ash equilibria of n-person games,'' \emph{Economic Theory}, vol.~18, pp.
  159--185, 2001.

\bibitem{cao2025efficient}
Y.~Cao, C.~Dang, and Y.~Yu, ``Efficient differentiable path-following methods
  for computing {N}ash equilibria,'' \emph{Computational Economics}, pp. 1--34,
  2025.

\bibitem{he2022distributed}
X.~He and J.~Huang, ``Distributed {N}ash equilibrium seeking and disturbance
  rejection for high-order integrators over jointly strongly connected
  switching networks,'' \emph{IEEE Transactions on Cybernetics}, 2022.

\bibitem{yuan2024event}
H.~Yuan, L.~Zhao, Y.~Yuan, and Y.~Xia, ``Event-triggered mechanism-based
  discrete-time {N}ash equilibrium seeking for graphic game with
  outlier-resistant {ESO},'' \emph{IEEE Transactions on Cybernetics}, pp.
  1--13, 2024.

\bibitem{Rao2023}
X.~Rao, W.~Xu, S.~Yang, and W.~Yu, ``A distributed coding-decoding-based {N}ash
  equilibrium seeking algorithm over directed communication network,''
  \emph{Science China Technological Sciences}, pp. 1--12, 2023.

\bibitem{Ye2020}
M.~Ye, ``Distributed robust seeking of {N}ash equilibrium for networked games:
  An extended state observer-based approach,'' \emph{IEEE Transactions on
  Cybernetics}, vol.~52, no.~3, pp. 1527--1538, 2020.

\bibitem{li2024logical}
C.~Li, A.~Li, Y.~Wu, and L.~Wang, ``Logical dynamic games: Models, equilibria,
  and potentials,'' \emph{IEEE Transactions on Automatic Control}, 2024.

\bibitem{huang2024distributed}
Y.~Huang, Z.~Meng, and J.~Sun, ``Distributed {N}ash equilibrium seeking for
  multicluster aggregative game of {E}uler--{L}agrange systems with coupled
  constraints,'' \emph{IEEE Transactions on Cybernetics}, pp. 1--12, 2024.

\bibitem{deng2022nash}
Z.~Deng and Y.~Liu, ``Nash equilibrium seeking algorithm design for distributed
  nonsmooth multicluster games over weight-balanced digraphs,'' \emph{IEEE
  Transactions on Neural Networks and Learning Systems}, vol.~34, no.~12, pp.
  10\,802--10\,811, 2022.

\bibitem{Wu2020}
C.~Wu, W.~Gu, R.~Bo, H.~MehdipourPicha, P.~Jiang, Z.~Wu, S.~Lu, and S.~Yao,
  ``Energy trading and generalized {N}ash equilibrium in combined heat and
  power market,'' \emph{IEEE Transactions on Power Systems}, vol.~35, no.~5,
  pp. 3378--3387, 2020.

\bibitem{Zhou2005}
J.~Zhou, W.~H. Lam, and B.~G. Heydecker, ``The generalized {N}ash equilibrium
  model for oligopolistic transit market with elastic demand,''
  \emph{Transportation Research Part B: Methodological}, vol.~39, no.~6, pp.
  519--544, 2005.

\bibitem{Bianchi2021}
M.~Bianchi and S.~Grammatico, ``Continuous-time fully distributed generalized
  {N}ash equilibrium seeking for multi-integrator agents,'' \emph{Automatica},
  vol. 129, p. 109660, 2021.

\bibitem{deng2018distributed}
Z.~Deng and X.~Nian, ``Distributed generalized {N}ash equilibrium seeking
  algorithm design for aggregative games over weight-balanced digraphs,''
  \emph{IEEE Transactions on Neural Networks and Learning Systems}, vol.~30,
  no.~3, pp. 695--706, 2018.

\bibitem{meng2022attack}
Q.~Meng, X.~Nian, Y.~Chen, and Z.~Chen, ``Attack-resilient distributed {N}ash
  equilibrium seeking of uncertain multiagent systems over unreliable
  communication networks,'' \emph{IEEE Transactions on Neural Networks and
  Learning Systems}, 2022.

\bibitem{deng2021distributed}
Z.~Deng, ``Distributed generalized {N}ash equilibrium seeking algorithm for
  nonsmooth aggregative games,'' \emph{Automatica}, vol. 132, p. 109794, 2021.

\bibitem{franci2021training}
B.~Franci and S.~Grammatico, ``Training generative adversarial networks via
  stochastic {N}ash games,'' \emph{IEEE Transactions on Neural Networks and
  Learning Systems}, vol.~34, no.~3, pp. 1319--1328, 2021.

\bibitem{Wang2021}
Z.~Wang, F.~Liu, Z.~Ma, Y.~Chen, M.~Jia, W.~Wei, and Q.~Wu, ``Distributed
  generalized {N}ash equilibrium seeking for energy sharing games in
  prosumers,'' \emph{IEEE Transactions on Power Systems}, vol.~36, no.~5, pp.
  3973--3986, 2021.

\bibitem{wilson1971computing}
R.~Wilson, ``Computing equilibria of n-person games,'' \emph{SIAM Journal on
  Applied Mathematics}, vol.~21, no.~1, pp. 80--87, 1971.

\bibitem{xia2023collaborative}
Z.~Xia, Y.~Liu, W.~Yu, and J.~Wang, ``A collaborative neurodynamic optimization
  approach to distributed {N}ash-equilibrium seeking in multicluster games with
  nonconvex functions,'' \emph{IEEE Transactions on Cybernetics}, 2023.

\bibitem{chen2024approaching}
G.~Chen, G.~Xu, F.~He, Y.~Hong, L.~Rutkowski, and D.~Tao, ``Approaching the
  global {N}ash equilibrium of non-convex multi-player games,'' \emph{IEEE
  Transactions on Pattern Analysis and Machine Intelligence}, 2024.

\bibitem{chatterjee2009optimization}
B.~Chatterjee, ``An optimization formulation to compute {N}ash equilibrium in
  finite games,'' in \emph{2009 Proceeding of International Conference on
  Methods and Models in Computer Science (ICM2CS)}.\hskip 1em plus 0.5em minus
  0.4em\relax IEEE, 2009, pp. 1--5.

\bibitem{xia2005recurrent}
Y.~Xia and J.~Wang, ``A recurrent neural network for solving nonlinear convex
  programs subject to linear constraints,'' \emph{IEEE Transactions on Neural
  Networks}, vol.~16, no.~2, pp. 379--386, 2005.

\bibitem{liu2016collective}
Q.~Liu, S.~Yang, and J.~Wang, ``A collective neurodynamic approach to
  distributed constrained optimization,'' \emph{IEEE Transactions on Neural
  Networks and Learning Systems}, vol.~28, no.~8, pp. 1747--1758, 2016.

\bibitem{yan2016collective}
Z.~Yan, J.~Fan, and J.~Wang, ``A collective neurodynamic approach to
  constrained global optimization,'' \emph{IEEE Transactions on Neural Networks
  and Learning Systems}, vol.~28, no.~5, pp. 1206--1215, 2016.

\bibitem{li2019distributed}
X.~Li, L.~Xie, and Y.~Hong, ``Distributed continuous-time nonsmooth convex
  optimization with coupled inequality constraints,'' \emph{IEEE Transactions
  on Control of Network Systems}, vol.~7, no.~1, pp. 74--84, 2019.

\bibitem{jiang2022second}
X.~Jiang, S.~Qin, X.~Xue, and X.~Liu, ``A second-order accelerated neurodynamic
  approach for distributed convex optimization,'' \emph{Neural Networks}, vol.
  146, pp. 161--173, 2022.

\bibitem{wei2023distributed}
M.~Wei, W.~Yu, H.~Liu, and Q.~Xu, ``Distributed weakly convex optimization
  under random time-delay interference,'' \emph{IEEE Transactions on Network
  Science and Engineering}, vol.~11, no.~1, pp. 212--224, 2023.

\bibitem{aubin2009differential}
J.-P. Aubin, H.~Frankowska, J.-P. Aubin, and H.~Frankowska, \emph{Differential
  Inclusions}.\hskip 1em plus 0.5em minus 0.4em\relax Springer, 2009.

\end{thebibliography}

\end{document}